\documentclass[12pt]{article}
\usepackage{latexsym,amsfonts,amsmath}
\usepackage{amssymb,amsmath,amsthm}
\setlength{\textwidth}{6.27in} \setlength{\textheight}{8in}
\setlength{\oddsidemargin}{0in} \setlength{\evensidemargin}{0in}

\def\qi#1 {\fbox {\footnote {\ }}\ \footnotetext { From Qi: {\color{red}#1}}}

\newtheorem{theorem}{Theorem}[section]
\newtheorem{definition}[theorem]{Definition}
\newtheorem{remark}[theorem]{Remark}
\newtheorem{lemma}[theorem]{Lemma}
\newtheorem{corollary}[theorem]{Corollary}
\newtheorem{proposition}[theorem]{Proposition}
\newtheorem{fact}{Fact}

\newcommand{\cP}{\mathcal P}
\newcommand{\Z}{\mathbb Z}
\newcommand{\Q}{\mathbb Q}
\newcommand{\N}{\mathbb N}

\newcommand{\C}{\mathbb C}

\newcommand{\lcm}{\mathrm{lcm}}
\newcommand{\ord}{\mathrm{ord}}

\begin{document}

\title{New nonexistence results on $(m,n)$-generalized bent functions}
\author{Ka Hin Leung \footnote{Research is supported by grant
R-146-000-158-112, Ministry of Education, Singapore} \\ Department of Mathematics\\
National University of Singapore\\ Kent Ridge, Singapore 119260\\
Republic of Singapore \\[1cm]
Qi Wang\\ Department of Computer Science and Engineering\\
Southern University of Science and Technology\\
Shenzhen, Guangdong 518055, China
}
\date{}
\maketitle

\begin{abstract}
  In this paper, we present some new nonexistence results on $(m,n)$-generalized bent functions, which improved recent results. More precisely, we derive new nonexistence results for general $n$ and $m$ odd or $m \equiv 2 \pmod{4}$, and further explicitly prove nonexistence of $(m,3)$-generalized bent functions for all integers $m$ odd or $m \equiv 2 \pmod{4}$. The main tools we utilized are certain exponents of minimal vanishing sums from applying characters to group ring equations that characterize $(m,n)$-generalized bent functions.

\end{abstract}
{\bf Keywords:} Exponent, generalized bent function, minimal relation, nonexistence, vanishing sum

\section{Introduction}\label{sec-intro}

Let $m \ge 2$, $n$ be positive integers, and $\zeta_m = e^{\frac{2\pi \sqrt{-1}}{m}}$ be a primitive complex $m$-th root of unity. A function $f: \Z_2^n \rightarrow \Z_m$ is called an {\em $(m,n)$-generalized bent function} (GBF) if 
\begin{equation}\label{eqn-gbf}
  |F(y)|^2 = 2^n
\end{equation}
for all $y \in \Z_2^n$, where $F(y)$ is defined as 
\begin{equation}\label{eqn-fy}
  F(y) := \sum_{x \in \Z_2^n} \zeta_m^{f(x)} (-1)^{y \cdot x},
\end{equation}
and $y \cdot x$ denotes the usual inner product. In particular, when $m = 2$, the generalized bent functions defined above are simply boolean bent functions introduced by Rothaus~\cite{Rothaus76}, whereas the function $F: \Z_2^n \rightarrow \mathbb{R}$ in fact becomes the Fourier transform of the boolean function $f$. In 1985, Kumar, Scholtz, and Welch~\cite{KSW85} generalized the notion of boolean bent function by considering bent functions from $\Z_m^n$ to
$\Z_m$. For recent nonexistence results on such generalized bend functions, see Leung and Schmidt~\cite{LS19}. Schmidt~\cite{Schmidt09} investigated generalized bent functions from $\Z_2^n$ to $\Z_m$ for their applications in CDMA communications. For the boolean case, it is well known that bent function exists if and only if $n$ is even, and many constructions were reported (for a survey see~\cite{Carlet10}). In the literature, there exist constructions of generalized bent function from $\Z_2^n$ to $\Z_m$ for $m = 4, 8, 2^k$ (for example,
see~\cite{Schmidt09,Schmidt092,SMGS13,TXQF17}). Very recently, Liu, Feng and Feng~\cite{LFF17} presented several nonexistence results on generalized bent functions from $\Z_2^n$ to $\Z_m$. In this paper, we continue to investigate the nonexistence of such generalized bent functions, and present more new nonexistence results. If $m$ and $n$ are both even or $m$ is divisible by $4$, then there exists an $(m,n)$-generalized bent function~\cite{LFF17}. Therefore, we
restrict attention to the following two cases: 
\begin{itemize}
  \item[(i)] $m$ is odd; 
  \item[(ii)] $n$ is odd and $m \equiv 2 \pmod{4}$.
\end{itemize}
In the following, we always assume that $m$ is odd or $m = 2m'$ with $m'$ odd.    

The remainder of this paper is organized as follows. In Section~\ref{sec-pre}, we introduce some basic tools and auxiliary results. In Section~\ref{sec-main}, we give several new nonexistence results of $(m,n)$-generalized bent functions, which improve the recent results in~\cite{LFF17}. Furthermore, we show that no $(m,3)$-GBF exists for all $m$ odd or $m \equiv 2 \pmod{4}$ in Section~\ref{sec-n3}. 

\section{Basic tools and auxiliary results}\label{sec-pre}

In this section, we introduce some basic tools and auxiliary results, which will be used in later sections.

\subsection{Group ring and character theory}\label{sec-pre1}

It turns out that group ring and characters of abelian groups play an important role in the study of GBFs. Let $G$ be a finite group of order $v$, $R$ be a ring, and $R[G]$ denote the group ring of $G$ over $R$. For a subset $D$ of a group $G$, we may identify $D$ with the group ring element $\sum_{g \in G} d_g g \in R[G]$, also denoted by $D$ by abuse of notation, where $d_g \in R$ and these $d_g$'s are called {\em coefficients} of $D$. Let $1_G$ denote the
identity element of $G$ and let $r$ be an element in $R$. For simplicity, we write $r$ for the group ring element $r 1_G \in R [G]$. For the group ring element $D = \sum_{g \in G} d_g g \in R[G]$, its {\em support} is defined as
$$
supp(D) := \{ g \in G: d_g \ne 0 \},
$$
and we also define $|D| := \sum d_g$ and $|| D || := \sum |d_g|$. Let $t$ be an integer coprime to $m$. For $D = \sum_{g \in G} d_g g \in \Z[\zeta_m] [G]$, we write $D^{(t)} = \sum d_g^\sigma g^t$, where $\sigma$ is the automorphism of $\Q[\zeta_m]$ determined by $\zeta_m^\sigma = \zeta_m^t$.

The group ring notation is very useful when applying characters. A {\em character} $\chi$ of $G$ is a homomorphism $\chi: G \rightarrow \C^*$. The set of all such characters forms a group $\hat{G}$ which is isomorphic to $G$ itself, and the identity element of $\hat{G}$, denoted by $\chi_0$, which maps every element in $G$ to $1$ (i.e., $\chi_0 (g) = 1$ for all $g \in G$), is called the {\em principal character} of $G$. It is clear that the character group has the
multiplication in $\hat{G}$ defined by $\chi\tau (g) = \chi(g) \tau(g)$ for $\chi, \tau \in \hat{G}$. For $D = \sum_{g \in G} d_g g \in \C[G]$ and $\chi \in \hat{G}$, we have $\chi(D) = \sum_{g \in G} d_g \chi (g)$. For a subgroup $U$ of the group $G$, we define a subgroup of $\hat{G}$ as $U^\perp := \{ \chi \in \hat{G}: \chi(g) = 1 \textrm{ for all $g \in U$} \}$. If $\chi \in U^\perp$, we say that the character $\chi$ is {\em trivial on $U$}. It is easy to see that $| U^\perp | = |G|/ |U|$. The following two results are standard and well-known in character theory. 

\begin{fact}[Orthogonality relations]\label{fact1}
Let $G$ be a finite abelian group of order $v$ with identity $1_G$. Then
$$
\sum_{\chi \in \hat{G}} \chi (g) = \left\{\begin{array}{cc}
  0 & \textrm{ if $g \ne 1_G$},\\
  v & \textrm{ if $g = 1_G$}, 
\end{array}\right.
$$
and 
$$
\sum_{g \in G} \chi(g) = \left\{\begin{array}{cc}
  0 & \textrm{ if $ \chi \ne \chi_0$}, \\
  v & \textrm{ if $\chi = \chi_0$}.
\end{array}\right.
$$
\end{fact}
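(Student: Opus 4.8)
The plan is to derive both identities from a single ``shift and average'' argument, after first disposing of the two degenerate cases. If $g = 1_G$ then $\chi(1_G) = 1$ for every $\chi \in \hat{G}$, and since $\hat{G} \cong G$ has order $v$ the first sum equals $v$; likewise, if $\chi = \chi_0$ then every term of the second sum is $1$, so that sum equals $v$.

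For the remaining case of the second identity, I would fix $\chi \ne \chi_0$ and choose $h \in G$ with $\chi(h) \ne 1$, which exists directly from the definition of $\chi_0$. Setting $S = \sum_{g \in G} \chi(g)$ and reindexing the sum by the bijection $g \mapsto hg$ of $G$ gives $\chi(h) S = \sum_{g \in G} \chi(hg) = S$, hence $(\chi(h) - 1) S = 0$ and $S = 0$. The first identity is dual: fix $g \ne 1_G$, assume for the moment a character $\psi$ with $\psi(g) \ne 1$ is available, and put $T = \sum_{\chi \in \hat{G}} \chi(g)$. Since $\chi \mapsto \psi\chi$ permutes $\hat{G}$, we get $\psi(g) T = \sum_{\chi \in \hat{G}} (\psi\chi)(g) = T$, so $T = 0$.

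The only substantive ingredient, and the step I expect to be the main obstacle, is producing for each $g \ne 1_G$ a character $\psi \in \hat{G}$ with $\psi(g) \ne 1$. This uses the structure theory of finite abelian groups: writing $G \cong \Z_{n_1} \times \cdots \times \Z_{n_k}$, one builds characters coordinatewise from the maps $a \mapsto \zeta_{n_i}^{ab}$ on each cyclic factor $\Z_{n_i}$, which yields $n_1 \cdots n_k = |G|$ distinct characters of $G$; in particular $|\hat{G}| = |G| = v$, which we already used. If $g \ne 1_G$, some coordinate $g_i$ of $g$ is nonzero in $\Z_{n_i}$, and taking $\psi$ nontrivial on that factor and trivial on the others gives $\psi(g) = \zeta_{n_i}^{g_i} \ne 1$. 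Equivalently, one may simply invoke the standard facts that $\hat{G} \cong G$ and that the characters of a finite abelian group separate its points; with those in hand the proof reduces entirely to the averaging step of the previous paragraph.
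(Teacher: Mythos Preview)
Your argument is correct and is the standard proof of the orthogonality relations: handle the trivial cases directly, then use the shift-by-$h$ (respectively, shift-by-$\psi$) trick to force the sum to vanish, with the structure theorem for finite abelian groups supplying the needed separating character. There is nothing to compare against, however: the paper does not prove Fact~\ref{fact1} at all, but merely records it as ``standard and well-known in character theory'' and cites no argument. Your write-up is a complete and acceptable justification of a result the authors simply assume.
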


\begin{fact}[Fourier inversion formula]\label{fact2}
  Let $G$ be a finite abelian group of order $v$, let $D = \sum_{g \in G} d_g g \in \C[G]$ by abuse of notation and $\chi(D) = \sum_{g \in G} d_g \chi(g)$. Then the coefficients in $D$ are determined by 
  $$
  d_g  = \frac{1}{v} \sum_{\chi \in \hat{G}} \chi(D g^{-1}).
  $$
\end{fact}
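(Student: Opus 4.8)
The plan is to derive the inversion formula directly from the first orthogonality relation in Fact~\ref{fact1}, since $\hat{G}$ is a finite abelian group of order $v$ as well (because $\hat{G} \cong G$). First I would write $D = \sum_{h \in G} d_h h$ and compute the group ring product $D g^{-1} = \sum_{h \in G} d_h\, h g^{-1}$. Applying an arbitrary character $\chi \in \hat{G}$ and using that $\chi$ is a homomorphism into $\C^*$, so that $\chi(hg^{-1}) = \chi(h)\chi(g)^{-1} = \chi(h)\overline{\chi(g)}$, I obtain $\chi(D g^{-1}) = \sum_{h \in G} d_h\, \chi(hg^{-1})$.

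Next I would sum this identity over all $\chi \in \hat{G}$ and interchange the two summations, which is legitimate since both index sets are finite:
$$
\sum_{\chi \in \hat{G}} \chi(D g^{-1}) = \sum_{h \in G} d_h \sum_{\chi \in \hat{G}} \chi(hg^{-1}).
$$
By the first identity in Fact~\ref{fact1}, applied to the group element $hg^{-1}$, the inner sum equals $v$ when $hg^{-1} = 1_G$, i.e.\ when $h = g$, and equals $0$ otherwise. Hence the right-hand side collapses to the single term $v\, d_g$, and dividing by $v$ yields the claimed formula $d_g = \frac{1}{v}\sum_{\chi \in \hat{G}} \chi(D g^{-1})$.

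There is essentially no serious obstacle in this argument; it is a routine consequence of orthogonality. The only points requiring a little care are: that $|\hat{G}| = |G| = v$, so the normalizing constant is unambiguous; that one uses the group ring multiplication correctly, so that the coefficient of $1_G$ in $hg^{-1}$ is picked out precisely by $h = g$; and that $\chi(g)^{-1} = \chi(g^{-1})$ since characters are $\C^*$-valued. Conceptually this is simply the statement that, up to the scalar $v$, the transforms $D \mapsto (\chi(D))_{\chi \in \hat{G}}$ and its $\chi$-average form an inverse pair, i.e.\ it is the finite Fourier inversion on the abelian group $G$.
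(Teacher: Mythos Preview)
Your argument is correct and is exactly the standard derivation of Fourier inversion from the orthogonality relations in Fact~\ref{fact1}. The paper itself gives no proof of Fact~\ref{fact2}, treating it as a well-known result, so there is nothing further to compare.
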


\subsection{Some auxiliary results}\label{sec-pre2}

We now characterize $(m,n)$-generalized bent functions using the group ring equations. Instead of working with additive groups, we use multiplicative notation. We denote the cyclic group of order $m$ by $C_m$, and set $G = C_2^n$. Whenever $s | m$, we also denote the subgroup of order $s$ in $C_m$ by $C_s$.  


\begin{definition}\label{def-bfdf}
Let $f: G \rightarrow \Z_m$ be a function, and $g$ be a generator of $C_m$. We define an element $B_f$ in the group ring $\Z[\zeta_m] [G]$ corresponding to $f$ by 
$$
B_f := \sum_{x \in G} \zeta_m^{f(x)} x .
$$
Furthermore, we define an element $D_f$ in the group ring $\Z[C_m] [G]$ by
$$
D_f := \sum_{x \in G} g^{f(x)} x.  
$$
\end{definition}

\begin{remark}\label{rmk-not}
To study $(m,n)$-GBFs, we may assume that $C_m=\langle \{ g^{f(x)}: x\in G\}\rangle$. By scaling if necessary, we may always assume $f(1_G)=0$, i.e., $g^{f(1_G)} = g^0$ is the identity element of $C_m$. From time to time, we may also interpret $\Z[C_m][G]$ as $\Z[C_m \cdot G]$, where $g^0$ and $1_G$ in $\Z[C_m \cdot G]$ both denote the identity element of $C_m \cdot G$. 
\end{remark}

Let $\tau$ be a character that maps $g$ to $\zeta_m$, then it is clear that $\tau(D_f) = B_f$. Moreover, every element $y \in G$ determines a character $\chi_y$ of $G$ by 
$$
\chi_y (x) = (-1)^{y \cdot x},
$$
for all $x \in G$. It is easily verified that every complex character of $G$ is equal to some $\chi_y$ with $y \in G$. Note that
\begin{equation}\label{eqn-chibf}
  \chi_y (B_f) = \sum_{x \in G} \zeta_m^{f(x)} \chi_y(x) = \sum_{x \in G} \zeta_m^{f(x)} (-1)^{y \cdot x} = F(y),
\end{equation}
for all $y \in G$, where $F(y)$ is defined in (\ref{eqn-fy}). It then follows from (\ref{eqn-gbf}) and (\ref{eqn-chibf}) that $f$ is an $(m,n)$-GBF if and only if 
\begin{equation}\label{eqn-chibf2}
  |\chi(B_f)|^2 = 2^n,
\end{equation}
for all $\chi \in \hat{G}$. We now have the following characterization of $(m,n)$-GBFs. 

\begin{proposition}\label{prop-gbf}
Let $f$ be a function from $G$ to $\Z_m$. Then $f$ is an $(m,n)$-GBF if and only if 
\begin{equation}\label{eqn-gbfbf}
  B_f B_f^{(-1)} = 2^n.
\end{equation}
Furthermore, if $f(G) = 2 \Z_m$, then $f$ can be regarded as an $(m',n)$-GBF, where $m = 2m'$ with $m'$ odd.
\end{proposition}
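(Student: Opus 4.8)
\emph{Plan.} For the first equivalence the idea is to route everything through the character criterion (\ref{eqn-chibf2}), which has already been shown to be equivalent to $f$ being an $(m,n)$-GBF. First I would compute $B_f^{(-1)}$ explicitly: since $-1$ is coprime to $m$ and every element of $G=C_2^n$ satisfies $x^{-1}=x$, the automorphism $\sigma\colon\zeta_m\mapsto\zeta_m^{-1}$ gives $B_f^{(-1)}=\sum_{x\in G}\zeta_m^{-f(x)}x$. Next, for an arbitrary character $\chi=\chi_y$ of $G$ I would use that $\chi_y(x)=(-1)^{y\cdot x}\in\{\pm1\}$ is real, so $\chi_y(B_f^{(-1)})=\overline{\chi_y(B_f)}$ and therefore $\chi_y\!\left(B_fB_f^{(-1)}\right)=|\chi_y(B_f)|^2$. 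Hence (\ref{eqn-chibf2}) says precisely that $\chi\!\left(B_fB_f^{(-1)}-2^n\right)=0$ for every $\chi\in\hat G$. Applying the Fourier inversion formula (Fact~\ref{fact2}) to the group ring element $D:=B_fB_f^{(-1)}-2^n 1_G\in\C[G]$ then shows every coefficient of $D$ vanishes, i.e.\ (\ref{eqn-gbfbf}); conversely, if (\ref{eqn-gbfbf}) holds, applying an arbitrary $\chi$ to it recovers (\ref{eqn-chibf2}).

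\emph{Second statement.} Now $m=2m'$ with $m'$ odd and $f(G)=2\Z_m$, so $f(x)\in 2\Z_m$ for every $x$. I would define $\tilde f\colon G\to\Z_{m'}$ by $\tilde f(x)\equiv f(x)/2\pmod{m'}$; this is well defined because $2a\equiv 2b\pmod{2m'}$ forces $a\equiv b\pmod{m'}$, and $f(G)=2\Z_m$ yields $\tilde f(G)=\Z_{m'}$, matching the standing normalization of Remark~\ref{rmk-not}. The key identity is $\zeta_m^{f(x)}=\zeta_{2m'}^{2\tilde f(x)}=\zeta_{m'}^{\tilde f(x)}$, so the element $B_{\tilde f}\in\Z[\zeta_{m'}][G]$ of Definition~\ref{def-bfdf} attached to $\tilde f$ satisfies $\chi(B_{\tilde f})=\chi(B_f)$ for all $\chi\in\hat G$. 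Since $f$ is an $(m,n)$-GBF we get $|\chi(B_{\tilde f})|^2=|\chi(B_f)|^2=2^n$ for every $\chi\in\hat G$, which by (\ref{eqn-chibf}) and (\ref{eqn-gbf}) is exactly the statement that $\tilde f$ is an $(m',n)$-GBF.

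\emph{Main obstacle.} The argument is short and essentially computation-free; the two points that need attention are (a) recording that the characters of $C_2^n$ are real-valued, which is precisely what turns $B_f^{(-1)}$ into the complex conjugate of $B_f$ after applying $\chi$ and makes $B_fB_f^{(-1)}$ the correct group-ring object, and (b) upgrading ``$\chi(D)=0$ for all $\chi$'' to ``$D=0$'' via Fourier inversion (Fact~\ref{fact2}) rather than merely comparing absolute values. The well-definedness of the halving map $f\mapsto\tilde f$ is the only genuinely new check in the ``furthermore'' clause, and it is immediate.
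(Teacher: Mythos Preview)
Your proposal is correct and follows essentially the same route as the paper: both arguments pass through the identity $\chi(B_fB_f^{(-1)})=|\chi(B_f)|^2$ (you justify it explicitly via the reality of the characters of $C_2^n$, while the paper simply asserts it) and then invoke Fourier inversion (Fact~\ref{fact2}) to upgrade the character equalities to the group-ring equation (\ref{eqn-gbfbf}). For the ``furthermore'' clause your construction of $\tilde f$ just makes explicit the paper's one-line remark that $\zeta_m^{f(x)}$ is an $m'$-th root of unity when $f(G)\subseteq 2\Z_m$.
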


\begin{proof}
  From (\ref{eqn-chibf2}) it follows that 
  $$
  |\chi(B_f)|^2 = \chi (B_f B_f^{(-1)}) = 2^n,
  $$
  for all characters $\chi$ of $G$. Using Facts~\ref{fact1} and~\ref{fact2}, we are able to determine all the coefficients of $B_f B_f^{(-1)}$, i.e., (\ref{eqn-chibf2}) holds if and only if (\ref{eqn-gbfbf}) is satisfied. The last statement follows from the fact that $\zeta_m^{f(x)}$ becomes an $m'$-th root of unity.  
\end{proof}

Observe that we may write 
\begin{equation}\label{eqn-dfex}
  D_f D_f^{(-1)} = \sum_{x \in G} \sum_{y \in G} g^{f(y + x)} g^{-f(y)} x = \sum_{x \in G} E_x x, 
\end{equation}
where $E_x = \sum_{y \in G} g^{f(y + x)} g^{-f(y)} \in \Z[C_m]$.

\begin{lemma}\label{lem-ex}
Suppose that $f$ is a GBF from $G$ to $\Z_m$. Then 
\begin{itemize}
\item [(a)] $E_x=E_x^{(-1)}$ and the coefficient of $g^0$ in $E_x$ is even for all $x\in G$; 
\item [(b)] For each character $\tau$ of order $m$ on $C_m$, we have $\tau(E_x) = 0$ for all $x \ne 1_G$.
\end{itemize}
\end{lemma}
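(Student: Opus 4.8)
The plan is to extract the two assertions from the defining group ring identity of Proposition~\ref{prop-gbf}, transported to $\Z[C_m][G]$. First I would observe that the character $\tau: g \mapsto \zeta_m$ of $C_m$ extends to a ring homomorphism $\Z[C_m][G] \to \Z[\zeta_m][G]$ sending $D_f$ to $B_f$ and $D_f^{(-1)}$ to $B_f^{(-1)}$, so $\tau\bigl(D_f D_f^{(-1)}\bigr) = B_f B_f^{(-1)} = 2^n$. Reading this through the expansion \eqref{eqn-dfex}, $D_f D_f^{(-1)} = \sum_{x \in G} E_x x$ with $E_x \in \Z[C_m]$, and applying $\tau$ componentwise gives $\sum_{x} \tau(E_x)\, x = 2^n \cdot 1_G$ in $\Z[\zeta_m][G]$; comparing coefficients of the basis elements $x \in G$ yields $\tau(E_{1_G}) = 2^n$ and $\tau(E_x) = 0$ for all $x \neq 1_G$. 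Since this holds for the distinguished $\tau$, I then need to upgrade it to \emph{every} character of order $m$: any such character is $\tau^t$ for some $t$ coprime to $m$, which equals $\tau \circ \sigma_t$ where $\sigma_t$ is the Galois automorphism $\zeta_m \mapsto \zeta_m^t$; applying $\sigma_t$ to the already-established identity $\tau(E_x)=0$ gives $\tau^t(E_x) = \sigma_t(\tau(E_x)) = 0$. This proves part (b).

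For part (a), the symmetry $E_x = E_x^{(-1)}$ should fall out directly from the definition $E_x = \sum_{y \in G} g^{f(y+x)} g^{-f(y)}$: reindexing the sum by $y \mapsto y + x$ (legitimate since $G = C_2^n$ has exponent $2$, so $-x = x$ and the substitution is a bijection of $G$) swaps the roles of the two exponents, sending each term $g^{f(y+x)-f(y)}$ to its inverse $g^{f(y)-f(y+x)}$, which is exactly the effect of $(-1)$ on $C_m$. For the parity claim, I would note that the coefficient of $g^0$ in $E_{x}$ counts (with multiplicity, via the $\Z[C_m]$-coefficients) the set of $y \in G$ with $g^{f(y+x)} = g^{f(y)}$, i.e. $f(y+x) \equiv f(y) \pmod{m}$ after the identification in Remark~\ref{rmk-not}; this set is closed under the fixed-point-free involution $y \mapsto y+x$ whenever $x \neq 1_G$, hence has even size. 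For $x = 1_G$ it is all of $G = C_2^n$, which has size $2^n$, again even — so the statement holds uniformly; alternatively one can read the $x=1_G$ case off from $E_{1_G} = \sum_y 1 = 2^n$ directly, or from the general principle that $E_x = E_x^{(-1)}$ forces the $g^0$-coefficient of $E_x + E_x^{(-1)} = 2E_x$ to absorb a factor of $2$ once one is careful about whether $g^0$ is its own inverse (it is).

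The only genuine subtlety, and the step I would be most careful about, is the bookkeeping in part (a): one must track whether the ``coefficient of $g^0$'' is being computed in $\Z[C_m]$ with $g^0$ distinguished from other powers, and confirm that the pairing $y \leftrightarrow y+x$ used for the parity argument does not fix any $y$ — which is guaranteed precisely because $G$ has exponent $2$ and $x \neq 1_G$, so $y + x \neq y$ always. Everything else is a routine transfer of Proposition~\ref{prop-gbf} across the homomorphism $\tau$ together with a Galois twist; I do not anticipate any real obstacle there.
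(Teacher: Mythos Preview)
Your proposal is correct and follows essentially the same route as the paper. The paper obtains $E_x = E_x^{(-1)}$ from the global identity $(D_f D_f^{(-1)})^{(-1)} = D_f D_f^{(-1)}$ rather than your explicit reindexing $y\mapsto y+x$, and it handles part (b) by applying an arbitrary order-$m$ character directly instead of your one-character-plus-Galois-twist, but these are cosmetic variations; your parity argument via the fixed-point-free involution $y\mapsto y+x$ on $\{y: f(y+x)\equiv f(y)\}$ is exactly the pairing the paper uses.
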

\begin{proof}
Note that $(D_f D_f^{(-1)})^{(-1)}=D_fD_f^{(-1)}$. Hence, we have $E_x=E_x^{(-1)}$ for all $x\in G$. Note that $E_{1_G}=2^n$. Thus, we may consider $x\neq 1_G$. Suppose that $x\neq 1_G$ and $(g_1x_1)(g_2x_2)^{-1}=g^{0}x$ for some $g_1,g_2\in C_m$ and $x_1,x_2\in G$. Note that $x_1\neq x_2$ and clearly, we have $(g_2x_2)(g_1x_1)^{-1}=g^{0}x$ as well. This shows that the coefficient of $g^0$ in $E_x$ is even. 

For any  character $\tau$ of order $m$ on $C_m$, we obtain
$$
\tau(D_f) \tau(D_f)^{(-1)} = B_f B_f^{(-1)} = 2^n = \sum_{x \in G} \tau(E_x) x.
$$
From (\ref{eqn-gbfbf}) in Proposition~\ref{prop-gbf}, the conclusion follows.
\end{proof}

The key in our study of $(m,n)$-GBFs is to investigate $E_x$. Lemma~\ref{lem-ex} (b) allows us to define the notion of vanishing sum (v-sum), which was also studied in details in~\cite{LL00}. Another important notion to study v-sum is the idea of exponents and reduced exponents defined in~\cite{Lenstra79}. In Section~\ref{sec-main}, we will use exponents to derive some new nonexistence results. To this end, we recall some notations defined in~\cite{Lenstra79} and prove some preliminary lemmas. 

Let $S$ be a finite index set, and we denote by $\cP(k)$ the set of all prime factors of the integer $k$. 

\begin{definition}\label{def-exp}
Suppose that $ X = \sum_{i\in S} a_i \mu_i $ where $\mu_i$'s are distinct roots of unity and all $a_i$'s are nonzero integers. We say that $u$ is the {\em exponent} of $X $ if $u$ is the smallest positive integer such that $\mu_i^u = 1$ for all $i$. We say that $k$ is the {\em reduced exponent} of $X $ if $k$ is the smallest positive integer such that there exists $j$ with $(\mu_i \mu_j^{-1} )^k = 1$ for all $i$.  
\end{definition}

For example, the exponent of $\sum_{i=0}^{p-1} \zeta_3 \zeta_p^i$ is $3p$, whereas the reduced exponent is $p$. To study vanishing sums, we consider those which are minimal.  

\begin{definition}\label{def-mini}
Suppose that $ X = \sum_{i \in S} a_i \mu_i=0 $ where $\mu_i$'s are distinct roots of unity and all $a_i$'s are nonzero integers. We say that the relation $X = 0$ is {\em minimal}, if for any proper subset $I \subsetneq S$, $\sum_{i \in I} a_i \mu_i \ne 0$.  
\end{definition}

Based on the definition of minimal relation, we have the following restriction on the cardinality of the index set $S$, in terms of the reduced exponents of a minimal vanishing sum.

\begin{proposition}~\cite{CJ76}\label{prop-rr}
  Suppose that $ X = \sum_{i \in S} a_i \mu_i = 0$ is a minimal relation with reduced exponent $k$ and all $a_i$'s are nonzero. Then $k$ is square free and 
  $$
  |S| \geq 2 +  \sum_{p \in \cP(k)} (p - 2).
  $$
\end{proposition}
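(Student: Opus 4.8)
The plan is to prove Proposition~\ref{prop-rr} by reducing to the situation of a primitive $k$-th root of unity and then invoking the structure of vanishing sums coming from the $p$-th cyclotomic polynomials for $p \in \cP(k)$. First I would normalize the relation: since we may multiply $X = \sum_{i \in S} a_i \mu_i = 0$ by $\mu_j^{-1}$ for any fixed $j \in S$ without destroying minimality (and without changing $|S|$ or the reduced exponent $k$), I would assume $1 \in \{\mu_i : i \in S\}$ and that all $\mu_i$ lie in $\Q(\zeta_k)$ with $k$ equal to the exponent of the translated sum. The first substantive claim is that $k$ must be squarefree. Here I would argue by contradiction: suppose $p^2 \mid k$ for some prime $p$, write $k = p k_0$, and use the fact that the minimal polynomial of a primitive $k$-th root of unity over $\Q(\zeta_{k_0})$ has the simple binomial-type shape $z^p - \zeta$ (because $p \mid k_0$), so that grouping the $\mu_i$ according to their image modulo the subfield forces a proper sub-sum to vanish, contradicting minimality. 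This is the standard Lenstra-type argument; I would cite or reconstruct it from~\cite{Lenstra79}.

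Next, for the cardinality bound, I would set up the $\Q$-linear (equivalently $\Z$-module) structure of $\Q(\zeta_k)$ as $\bigotimes_{p \in \cP(k)} \Q(\zeta_p)$ and use the basis of $\Q(\zeta_p)$ given by $\{1, \zeta_p, \dots, \zeta_p^{p-2}\}$ (using $1 + \zeta_p + \cdots + \zeta_p^{p-1} = 0$). The key point is that minimality of the relation is equivalent to the statement that the vector of coefficients $(a_i)$ spans, together with the single relation $X=0$, an irreducible configuration — more precisely, that no nonempty proper subset of the $\mu_i$'s is $\Z$-linearly dependent. I would then argue that a minimal vanishing sum of roots of unity with reduced exponent $k$ is, up to scaling and permutation, built as a "product/iterated" combination of the elementary relations $1 + \zeta_p + \cdots + \zeta_p^{p-1} = 0$, one for each $p \in \cP(k)$; counting terms in such an iterated combination gives exactly $|S| \ge 2 + \sum_{p \in \cP(k)}(p-2)$, since combining a relation with $s$ terms and the $p$-term elementary relation produces at least $s + (p-2)$ terms (one term of the old relation gets "expanded" into $p-1$ new ones, losing $1$ and gaining $p-1$). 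The base case is the trivial relation $1 - 1 = 0$ with $2$ terms when $k = 1$, or a single $p$-term relation when $k = p$.

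The main obstacle I anticipate is making the "iterated elementary relation" structure rigorous — that is, proving that every minimal relation of reduced exponent $k$ genuinely decomposes this way rather than merely bounding its size. The clean route is to induct on the number of prime factors of $k$: pick $p \in \cP(k)$, view $X = 0$ inside $\Q(\zeta_k) = \Q(\zeta_{k/p})(\zeta_p)$, expand each $\mu_i = \nu_i \zeta_p^{e_i}$ and collect coefficients of the basis $1, \zeta_p, \dots, \zeta_p^{p-2}$ of $\Q(\zeta_p)$ over $\Q(\zeta_{k/p})$; the vanishing of $X$ together with minimality should force each of the $p-1$ resulting $\Q(\zeta_{k/p})$-combinations either to be empty or to be (a scalar multiple of) the same minimal relation $Y = 0$ of reduced exponent dividing $k/p$, from which $|S| = (p-1)\,|\mathrm{supp}(Y)| - (\text{overcount}) \ge |\mathrm{supp}(Y)| + (p-2) \cdot 1$, and then apply induction to $Y$. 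Checking that the pieces are forced to be scalar multiples of a single minimal relation — and that the reduced exponent genuinely drops — is the delicate bookkeeping step; everything else is linear algebra over cyclotomic fields. Since the proposition is attributed to~\cite{CJ76}, I would at minimum sketch this induction and refer to that source for the full details.
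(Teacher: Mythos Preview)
The paper does not give its own proof of this proposition; it is stated with a citation to Conway--Jones~\cite{CJ76} and used as a black box. So there is no ``paper's proof'' to compare against, and your plan to sketch the argument and then defer to~\cite{CJ76} is exactly what the paper effectively does.

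That said, one step in your sketch is not correct as written. In the inductive bound, after decomposing over the basis $1,\zeta_p,\dots,\zeta_p^{p-2}$ of $\Q(\zeta_p)$ over $\Q(\zeta_{k/p})$, you assert that minimality forces each of the $p-1$ projected sums to be ``(a scalar multiple of) the same minimal relation $Y=0$''. This is false in general: minimality of $X$ does \emph{not} propagate to minimality of the projections, nor does it force them to be proportional. (A minimal relation with reduced exponent $30$, say $P_2^*P_3^*+P_5^*$, projects to pieces of different sizes.) The actual Conway--Jones argument is more delicate: one picks the largest prime $p\mid k$, counts how many of the $\mu_i$ lie in each coset of the $p$-th roots, and uses a pigeonhole/counting argument on those cosets together with the fact that any nontrivial $\Q(\zeta_{k/p})$-linear relation among $1,\zeta_p,\dots,\zeta_p^{p-1}$ must involve all $p$ of them. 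Your squarefree argument, on the other hand, is essentially correct.
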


For convenience, we define the following notation. 

\begin{definition}
For any group $H$, by $\N[H]$ we denote 
\[ 
\left\{ \sum_{g\in H} a_g g: a_g\in \Z \mbox{ and } a_g\geq 0 \right\} .
\]
\end{definition}

Now we consider the corresponding notion of minimal relation in $\N[C_m]$. From now on, we assume that $g$ is a generator of $C_m$. We recall the notion of minimality defined in Section 4 of~\cite{LL00}. 

\begin{definition}~\cite{LL00}\label{def-nmini}
  Let $D=\sum_{i =0 }^{m-1} a_i g^i\in \N[C_m]$. We say that $D$ is a {\em v-sum} if there exists a character $\tau$ of order $m$ such that $\tau(D) = \tau(\sum_{i =0}^{m-1} a_i g^i) = 0$. We say that $D$ is {\em minimal} if $\tau(\sum_{i = 0}^{m-1} b_i g^i)\neq 0$ whenever $0\leq b_i \leq a_i$ for all $i$ and $b_j < a_j$ for some $j$.
\end{definition}


Suppose that $S \subseteq \{0, \ldots, m-1\}$ and $a_i > 0$ for all $i \in S$. It is clear that if $D = \sum_{i \in S} a_i g^i$ is a minimal v-sum by Definition~\ref{def-nmini}, then $\tau(D) = \sum_{i \in S} a_i \tau(g)^i$ is a minimal relation by Definition~\ref{def-mini}. We now define the {\em reduced exponent} of $D$ as follows.

\begin{definition}\label{def-grexp}
Suppose that $D = \sum_{i = 0}^{m-1} d_i g^i \in \N [ C_m ]$ is a minimal v-sum. We define the {\em reduced exponent} $k$ of $D$ as the reduced exponent of the vanishing sum $\tau(D) = \sum_{i =0}^{m-1} d_i \tau(g)^i$.  
\end{definition}

Note that the reduced exponent defined above does not depend on the choice of the character $\tau$. 

\begin{lemma}\label{lem-dmini}
If $D \in \N[C_m]$ is a minimal v-sum with reduced exponent $k$, then $D = D' h$ for some $D' \in \N[C_k]$ and $h \in C_m$.  
\end{lemma}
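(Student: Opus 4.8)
The plan is to use a faithful character to pull the root-of-unity identity that defines the reduced exponent back to an identity among group elements of $C_m$. Fix a character $\tau$ of $C_m$ of order $m$ (so $\tau$ is faithful, its kernel being a subgroup of index $m$ in $C_m$, hence trivial), normalized so that $\tau(g)=\zeta_m$, and write $D=\sum_{i\in S}d_i g^i$ with $S\subseteq\{0,\dots,m-1\}$ and every $d_i>0$. Put $\mu_i=\tau(g^i)=\zeta_m^i$. Since $D$ is a minimal v-sum, $\tau(D)=\sum_{i\in S}d_i\mu_i=0$ is a minimal relation whose reduced exponent, by Definition~\ref{def-grexp}, equals the reduced exponent $k$ of $D$; so by Definition~\ref{def-exp} there is an index $j$ with $(\mu_i\mu_j^{-1})^k=1$ for all $i\in S$. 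A one-line substitution (replace $j$ by a fixed $i_0\in S$ and use $(\mu_{i_0}\mu_j^{-1})^k=1$) lets us assume $j\in S$, so that $h:=g^j$ is a genuine element of $C_m$.

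Next I would check that $k\mid m$. For each $i\in S$, $\mu_i\mu_j^{-1}=\zeta_m^{i-j}$ is an $m$-th root of unity, hence has order $m/\gcd(m,i-j)$ dividing $m$; the least common multiple of these orders over $i\in S$ is exactly the smallest exponent killing all the $\mu_i\mu_j^{-1}$ simultaneously, and being an lcm of divisors of $m$ it divides $m$. As $k$ is the minimum of such quantities over the admissible choices of $j$, we get $k\mid m$. (Proposition~\ref{prop-rr} also gives that $k$ is square free, but that is not needed here.) In particular the subgroup $C_k\le C_m$ of order $k$ is well defined and coincides with $\{x\in C_m: x^k=1_{C_m}\}$.

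Finally I would convert the exponent identity into a support condition. For every $i\in S$ we have $\tau\bigl((g^{i-j})^k\bigr)=(\mu_i\mu_j^{-1})^k=1$, and since $\tau$ is faithful this forces $(g^{i-j})^k=1_{C_m}$, i.e.\ $g^{i-j}\in C_k$. Therefore $Dh^{-1}=\sum_{i\in S}d_i g^{i-j}$ is supported entirely inside $C_k$ and clearly has nonnegative integer coefficients, so $D':=Dh^{-1}\in\N[C_k]$ and $D=D'h$, which is the assertion.

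I do not anticipate a real obstacle; the only points that require a little care are the divisibility $k\mid m$ (without which the subgroup $C_k$ in the statement would not even make sense) and the harmless reduction to $j\in S$ so that the translating element $h$ lies in $C_m$. Everything else is a direct transcription between the language of vanishing sums over $\Z[\zeta_m]$ and that of the group ring $\N[C_m]$, effected by the faithful character $\tau$.
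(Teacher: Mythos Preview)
Your proof is correct and follows essentially the same approach as the paper's: pick a faithful character $\tau$, use the reduced-exponent condition $(\tau(g^i)\tau(g^{-j}))^k=1$, and deduce that $Dg^{-j}\in\N[C_k]$. You are simply more explicit than the paper about the routine points (faithfulness of $\tau$, the divisibility $k\mid m$); your separate reduction to $j\in S$ is harmless but unnecessary, since $g^j\in C_m$ for any integer $j$.
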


\begin{proof}
  Write $D = \sum_{i \in S} d_i g^i$ and $\tau(D) = \sum_{i \in S} d_i \tau(g^i)$ where $S\subseteq \{0,\ldots, m-1\}$. Since $k$ is the reduced exponent of $D$, by Definition~\ref{def-grexp}, the reduced exponent of $\tau(D)$ is also $k$. Thus, there exists a $j$ such that $(\tau(g^i) \tau(g^{-j}))^k = 1$ for all $i \in S$. It then follows that $Dg^{-j} \in \N[C_k]$. The proof is then completed. 
\end{proof}

In view of Proposition~\ref{prop-rr}, we derive the following result. 

\begin{corollary}\label{coro-rr}
  Suppose that $D = \sum_{i = 0}^{m-1} a_i g^i \in \N[C_m]$ is a minimal v-sum with reduced exponent $k$. Then $k$ is square free and 
  $$
  || D || \ge 2 + \sum_{p \in \cP(k)} (p-2).
  $$
\end{corollary}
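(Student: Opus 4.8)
The plan is to deduce Corollary~\ref{coro-rr} directly from Proposition~\ref{prop-rr} and Lemma~\ref{lem-dmini}, treating these as black boxes. The essential observation is that $\|D\|$ equals the number of terms (counted with multiplicity) in the vanishing sum $\tau(D)$, which is exactly the quantity $|S|$ (again counted with multiplicity) in Proposition~\ref{prop-rr} when we expand each coefficient $a_i$ into $a_i$ copies of the root of unity $\tau(g)^i$. So the strategy has essentially three moves: first, pass from $D$ to its image $\tau(D)$ under a fixed character $\tau$ of order $m$; second, invoke the square-freeness of $k$ and the lower bound on the cardinality of the index set from Proposition~\ref{prop-rr}; third, match up that cardinality with $\|D\|$.

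First I would fix a character $\tau$ of $C_m$ of order $m$, so that by Definition~\ref{def-grexp} the reduced exponent of the minimal vanishing sum $\tau(D) = \sum_{i=0}^{m-1} a_i \tau(g)^i$ is also $k$. Since $D$ is a minimal v-sum in the sense of Definition~\ref{def-nmini}, the remark immediately following Definition~\ref{def-grexp} (or rather the discussion just before it) tells us that $\tau(D)$ is a minimal relation in the sense of Definition~\ref{def-mini}. The one subtlety is that Proposition~\ref{prop-rr} is phrased with \emph{distinct} roots of unity $\mu_i$ and nonzero integer coefficients $a_i$, whereas here the $a_i$ are positive integers but possibly larger than $1$; this is exactly the setting of Proposition~\ref{prop-rr}, so no expansion into multiplicities is needed at the level of applying the proposition --- we simply take $S = \mathrm{supp}(D)$, the $\mu_i = \tau(g)^i$ for $i \in S$, and the $a_i$ the corresponding (positive) coefficients. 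Proposition~\ref{prop-rr} then yields that $k$ is square free and
$$
|S| \geq 2 + \sum_{p \in \cP(k)} (p-2).
$$

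It remains to replace $|S|$ by $\|D\|$. By definition $\|D\| = \sum_{i} |a_i| = \sum_{i \in S} a_i$, and since each $a_i \geq 1$ we have $\|D\| = \sum_{i \in S} a_i \geq \sum_{i \in S} 1 = |S|$. Chaining this with the bound above gives
$$
\|D\| \geq |S| \geq 2 + \sum_{p \in \cP(k)} (p-2),
$$
which is the claimed inequality, and the square-freeness of $k$ comes along for free. I do not expect any real obstacle here: the only thing to be careful about is the bookkeeping between the two notions of minimality (Definition~\ref{def-nmini} for $D$ versus Definition~\ref{def-mini} for $\tau(D)$) and the fact that $\|D\|$ over-counts $|S|$ precisely when some coefficient exceeds $1$ --- both of which are handled by the inequality $a_i \geq 1$ and the observation already recorded in the text that a minimal v-sum maps to a minimal relation. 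Lemma~\ref{lem-dmini} is, strictly speaking, not needed for the inequality itself, but it explains why passing to $C_k$ is harmless and could be cited to emphasize that the reduced exponent genuinely governs the structure of $D$; I would mention it only in passing.
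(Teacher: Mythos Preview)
Your proposal is correct and matches the paper's approach exactly: the paper simply says the corollary follows ``in view of Proposition~\ref{prop-rr}'', and your write-up spells out precisely the intended argument --- passing to $\tau(D)$, applying Proposition~\ref{prop-rr} to get square-freeness of $k$ and the bound on $|S|=|\mathrm{supp}(D)|$, and then using $a_i\ge 1$ to obtain $\|D\|\ge |S|$. Lemma~\ref{lem-dmini} is indeed unnecessary here, as you note.
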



To deal with a v-sum $D \in \N [C_m]$ which is not minimal, we first decompose it into sum of minimal v-sums. It is straightforward to prove the following. 

\begin{lemma}\label{lem-smini}
Let $D \in \N[C_m]$ be a v-sum. Then $D$ can be written as the form $D = \sum D_i$, where $D_i$'s are minimal v-sums in $\N[C_m]$.  
\end{lemma}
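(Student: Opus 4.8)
The plan is to prove this by strong induction on the weight $\|D\|$. Let $D \in \N[C_m]$ be a v-sum, so there is a character $\tau$ of order $m$ with $\tau(D) = 0$. If $D$ is itself minimal (as in Definition~\ref{def-nmini}), we are done with a single summand. Otherwise, by definition of minimality there exist integers $b_i$ with $0 \le b_i \le a_i$ for all $i$, with $b_j < a_j$ for some $j$, and with $\tau\bigl(\sum_i b_i g^i\bigr) = 0$; we may further shrink the $b_i$'s until the resulting element $D_1 := \sum_i b_i g^i$ is a \emph{nonzero} minimal v-sum (note $D_1 \ne 0$ since $\tau$ applied to the zero element is $0$, but the zero element trivially fails to be a v-sum in the sense that we simply discard it — more carefully, among all nonzero sub-sums $\sum c_i g^i$ with $0 \le c_i \le a_i$ on which $\tau$ vanishes, pick one of minimal weight; such a sum exists since $D$ itself is one, and it is automatically a minimal v-sum).

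Now set $D' := D - D_1 = \sum_i (a_i - b_i) g^i$. Since $b_i \le a_i$ for all $i$, we have $D' \in \N[C_m]$, and $\tau(D') = \tau(D) - \tau(D_1) = 0$, so $D'$ is again a v-sum (or the zero element). Moreover $\|D'\| = \|D\| - \|D_1\| < \|D\|$ because $\|D_1\| \ge 1$. If $D' = 0$ we are done; otherwise, by the induction hypothesis $D' = \sum_{i \ge 2} D_i$ with each $D_i$ a minimal v-sum in $\N[C_m]$, and then $D = D_1 + \sum_{i \ge 2} D_i$ is the desired decomposition.

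I expect the only subtlety — and the paper flags this as "straightforward" — is making sure the extracted summand $D_1$ is a \emph{genuine nonzero} minimal v-sum rather than accidentally the zero element: one must argue that among all nontrivial sub-sums killed by $\tau$ there is a weight-minimal one, and that weight-minimality forces minimality in the sense of Definition~\ref{def-nmini}. Everything else is bookkeeping: coefficients stay nonnegative under subtraction because we only ever subtract a coefficientwise-dominated sub-sum, and the strict drop in weight makes the induction terminate. One small point worth stating explicitly is that the notion of v-sum depends on a fixed character $\tau$ of order $m$; but since any two such characters are related by a Galois automorphism fixing $0$ and permuting the primitive $m$-th roots of unity, vanishing under one is equivalent to vanishing under any other, so the decomposition is independent of this choice.
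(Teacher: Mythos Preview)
Your argument is correct and is precisely the straightforward induction on $\|D\|$ that the paper has in mind when it declines to give a proof. The only care needed---which you handle---is to extract a \emph{nonzero} coefficientwise-dominated sub-sum of minimal weight on which $\tau$ vanishes, so that the remainder stays in $\N[C_m]$ and the weight strictly drops.
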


We aim to find a lower bound of $||D||$ when $D$ is a v-sum. To do so, we need to extend the notion of reduced exponent and then apply Corollary~\ref{coro-rr}. Suppose that $D = \sum_{i = 1}^t D_i$ and $k_i$ is the reduced exponent of $D_i$ for each $i$. We may then define the exponent of $D$ to be $\lcm (k_1, \ldots, k_t)$. However, we note that such a decomposition is not necessarily unique. For example, if $m = 10$ and $h$ is a generator of $C_{10}$, then we have
\begin{eqnarray*}
   D = \sum_{i=1}^9 h^i & = & (1+h^5) + (1+h^5)h + (1+h^5)h^2 + (1+h^5) h^3 + (1+h^5) h^4 \quad \textrm{ and } \\
   D = \sum_{i=1}^9 h^i & = & (1+h^2+h^4+h^6+h^8) + (1+h^2+h^4+h^6+h^8)h.
 \end{eqnarray*}
Note that $(1+h^5) h^i$ and $(1 + h^2 + h^4 + h^6 + h^8) h^j$ are both minimal v-sums. If we use the notion of $\lcm$ of each decomposition, we will then get $2$ and $5$ as the reduced exponents, respectively. Thus, we need to modify the earlier definition of exponent as follows. 

\begin{definition}\label{def-cexp}
  Suppose that $ D = \sum_{i = 0}^{m-1} d_i g^i$ is a v-sum in $\N[C_m]$. We define the {\em c-exponent} of $D$ to be the smallest $k$ such that there exist $t$ minimal v-sums $D_1, \ldots, D_t$ in $\N[C_m]$ with $D = \sum_{i=1}^t D_i$ and $k = \lcm (k_1, \ldots, k_t)$, where $k_i$ is the reduced exponent of $D_i$ for $i = 1, \ldots, t$. 
\end{definition} 

Note that in the example above, the c-exponent of $D$ is $2$. 



\begin{lemma}\label{lem-exp}
Suppose that $D = \sum_{i =0}^{m-1} d_i g^i  \in \N[C_m]$ is a v-sum with c-exponent $k$. Write $m = \prod_{i = 1}^s p_i^{\alpha_i}$ and $k = \prod_{i=1}^t p_i$. Note that $t \leq s$ and $p_i$'s are distinct primes. Then we have the followings:   
\begin{itemize}
  \item[(a)] $||D|| \geq 2 + \sum_{i = 1}^t (p_i - 2)$;
  \item[(b)] $D = \sum_{i = 1}^t P_i E_i$, where $P_i$ is the subgroup of order $p_i$ and $E_i \in \Z[C_m]$ for all $i$; 
  \item[(c)] Suppose that $\prod_{i = 1}^t p_i^{\alpha_i} | d$ and $d | m$. If $\phi: \Z[C_m] \rightarrow \Z[C_d]$ is the natural projection, then $\chi(\phi(D)) = 0$ whenever $\ord(\chi) = d$.
\end{itemize}
\end{lemma}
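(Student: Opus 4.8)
The plan is to fix once and for all a decomposition of $D$ realizing its c-exponent, and to derive all three parts from it. By Definition~\ref{def-cexp} there are minimal v-sums $D_1,\dots,D_r\in\N[C_m]$ with $D=\sum_{j=1}^{r}D_j$ and $k=\lcm(k_1,\dots,k_r)$, where $k_j$ is the reduced exponent of $D_j$. By Corollary~\ref{coro-rr} every $k_j$ is square free, hence so is $k$; this is precisely what lets $k$ be written as $k=\prod_{i=1}^{t}p_i$, and clearly each $p_i$ divides some $k_j$. Since $D\neq0$ is a v-sum we have $r\ge1$, and since the reduced exponent of a minimal v-sum cannot be $1$, each $k_j\ge2$.

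For (a): because every $D_j\in\N[C_m]$, addition introduces no cancellation, so $||D||=\sum_{j=1}^{r}||D_j||$. Applying Corollary~\ref{coro-rr} to each $D_j$ and summing,
\[
||D||\;\ge\;\sum_{j=1}^{r}\Bigl(2+\sum_{p\in\cP(k_j)}(p-2)\Bigr)\;=\;2r+\sum_{i=1}^{t}(p_i-2)\,\bigl|\{\,j:p_i\mid k_j\,\}\bigr|.
\]
Since $r\ge1$, $|\{j:p_i\mid k_j\}|\ge1$ for each $i$, and $p_i-2\ge0$, the right-hand side is at least $2+\sum_{i=1}^{t}(p_i-2)$, which is (a).

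For (b): by Lemma~\ref{lem-dmini} each $D_j=D_j'h_j$ with $D_j'\in\N[C_{k_j}]$ and $h_j\in C_m$, where $k_j=\prod_{p\in\cP(k_j)}p$ is square free. Hence it suffices to show that such a minimal v-sum $D_j'$ lies in the ideal $\sum_{p\in\cP(k_j)}P_p\,\Z[C_{k_j}]$, with $P_p$ the order-$p$ subgroup of $C_{k_j}$: multiplying by $h_j$ and summing over $j$ then gives $D\in\sum_{i=1}^{t}P_i\,\Z[C_m]$, i.e.\ $D=\sum_{i=1}^{t}P_iE_i$ with $E_i\in\Z[C_m]$, since every $p\in\cP(k_j)$ is one of $p_1,\dots,p_t$. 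Over $\Q$ the containment is automatic, as decomposing $\Q[C_{k_j}]$ into its character components identifies $\sum_{p}P_p\Q[C_{k_j}]$ with the kernel of a faithful character, which contains $D_j'$; the real content is integrality. My plan for that is to work with the factorization $x^{k_j}-1=\prod_{d\mid k_j}\Phi_d(x)$, under which $P_p$ corresponds to $\prod_{d:\,p\mid d}\Phi_d$, and to show that these generate the kernel ideal modulo $\prod_{d<k_j}\Phi_d$, using $\mathrm{Res}(\Phi_p,\Phi_q)=\pm1$ for distinct primes $p,q$ (so that $1\in(\Phi_p,\Phi_q)$ in $\Z[x]$) and, where that does not suffice, the extra constraints forced by the minimality of $D_j'$; alternatively one invokes the analysis of minimal v-sums in~\cite{LL00}. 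I expect this to be the main obstacle of the lemma.

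For (c): take $D=\sum_{i=1}^{t}P_iE_i$ from (b) and apply the natural projection, so $\phi(D)=\sum_{i=1}^{t}\phi(P_i)\phi(E_i)$. Because $p_i^{\alpha_i}\mid d$ and $p_i^{\alpha_i}$ is exactly the $p_i$-part of $m$, we have $p_i\nmid m/d$; thus $P_i$ meets $\ker\phi$ (the subgroup of $C_m$ of order $m/d$) trivially, so $\phi(P_i)$ is the subgroup of order $p_i$ in $C_d$. If $\ord(\chi)=d$, then $\chi$ restricts nontrivially to this subgroup — otherwise $\chi$ would factor through a quotient of $C_d$ of order $d/p_i<d$ — whence $\chi(\phi(P_i))=0$ by the orthogonality relations, and therefore $\chi(\phi(D))=\sum_{i=1}^{t}\chi(\phi(P_i))\chi(\phi(E_i))=0$. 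This proves (c).
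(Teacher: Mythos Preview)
Your argument is essentially the paper's own proof: the same decomposition into minimal v-sums, the same application of Corollary~\ref{coro-rr} for (a), the same reduction via Lemma~\ref{lem-dmini} for (b), and the same character computation for (c). The one place you hedge is (b): the paper does not attempt your cyclotomic/resultant argument at all but simply invokes \cite[Theorem~2.2]{LL00} to write each $D_j'\in\N[C_{k_j}]$ with $\tau(D_j')=0$ as $\sum_{q\in\cP(k_j)}Q_qF_q$ with $F_q\in\Z[C_{k_j}]$, which is exactly the integrality statement you flag as the main obstacle---so your ``alternatively'' is in fact the intended route, and there is no need for the polynomial detour.
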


\begin{proof}
  By Lemma~\ref{lem-smini}, we may assume that $D = \sum_{i=1}^t D_i$ such that each $D_i$ is a minimal v-sum. Hence, by Corollary~\ref{coro-rr}, we have 
\begin{eqnarray*}
  \lefteqn{||D|| = \sum_{i=1}^t |D_i| } \\
  & &  \geq \sum_{i=1}^t [2+\sum_{q\in {\cP}(k_i)} (q-2)] \\
  & & \geq 2+\sum_{q\in {\cP}(k)} (q-2) \\ 
  & & = 2+\sum_{i=1}^t (p_i-2) ,
\end{eqnarray*}
because ${\cP}(k)=\bigcup_{i=1}^t {\cP}(k_i)$. 

By Lemma~\ref{lem-dmini}, $D_i=E_i g_i$ where $E_i\in \N[C_{k_i}]$ and $g_i \in C_m$. Clearly, $\tau(E_i) = 0$. Therefore, from~\cite[Theorem 2.2]{LL00}, it follows that $E_i=\sum_{q\in {\cP}(k_i)} Q_qF_q$, where $Q_q$ is the subgroup of order $q$ and $F_q\in\Z[C_{k_i}]$. Since $D=\sum D_i$, $D$ is of the desired form. 

Finally, note that if $\phi$ and $\chi$ are defined as in (c), then $\chi (\phi(D)) = 0$ as   $\chi(\phi(P_i))=\chi(P_i)=0$ for $i=1,\ldots, t$. 
\end{proof}
 


Next, we record a very useful result from~\cite[Theorem 4.8, Proposition 6.2]{LL00}.

\begin{proposition}\cite{LL00}\label{prop-nmini}
Let $D\in \N[C_m]$ be a minimal v-sum with c-exponent $k$. Then we have the followings:
\begin{itemize}
\item [(a)] If $k=p$ is prime and $P$ is the subgroup of order $p$, then $D=Ph$ for some $h\in C_m$.
\item [(b)] If $k=\prod_{i=1}^t p_i$ with $t\geq 2$ and $p_1<p_2<\cdots <p_t$ are primes,  then 
$t\geq 3$ and 
\[ ||D|| \geq  (p_1-1)(p_2-1)+(p_3-1).\]
Moreover, equality holds only if $D=(P_1^*P_2^*+P_3^*)h$ for some $h \in C_m$. Here $P_i^*=P_i-\{e\}$, and $P_i$ is the subgroup of order $p_i$. 
\end{itemize}
\end{proposition}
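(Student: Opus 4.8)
The plan is to reduce both statements to results already available from~\cite{LL00} and from Proposition~\ref{prop-rr} / Corollary~\ref{coro-rr}, using Lemma~\ref{lem-dmini} to pin down the structure of a minimal v-sum. Since $D$ is a \emph{minimal} v-sum, its c-exponent $k$ coincides with its reduced exponent (the decomposition $D=\sum D_i$ must be trivial, i.e.\ $t=1$, by minimality), so all the earlier machinery specialized to minimal v-sums applies directly. By Lemma~\ref{lem-dmini} we may write $D=D'h$ with $D'\in\N[C_k]$ and $h\in C_m$; since multiplication by $h$ is a group-ring automorphism preserving $\|\cdot\|$ and the v-sum property, it suffices to analyze $D'\in\N[C_k]$.

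For part~(a), when $k=p$ is prime, $D'\in\N[C_p]$ is a minimal v-sum whose reduced exponent is $p$. The only vanishing sums of $p$-th roots of unity (with nonnegative integer coefficients) are nonnegative integer multiples of the all-ones sum $P=\sum_{i=0}^{p-1}g^{p\text{-th root}}$ — this is the classical fact that the minimal polynomial $1+x+\cdots+x^{p-1}$ of $\zeta_p$ generates the relation lattice. Minimality then forces $D'=P$ exactly (a multiple $cP$ with $c\ge 2$ would contain the proper sub-v-sum $P$), hence $D=Ph$. I would cite this as the $k$ prime case of~\cite[Theorem 4.8]{LL00} rather than reprove it.

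For part~(b), with $k=\prod_{i=1}^t p_i$ and $t\ge 2$: first, Corollary~\ref{coro-rr} gives $\|D\|\ge 2+\sum_{i=1}^t(p_i-2)$, and one checks that for $t=2$ this lower bound $p_1+p_2-2$ is incompatible with the finer structure of minimal v-sums of squarefree exponent $p_1p_2$ — more precisely, a minimal v-sum supported on $C_{p_1p_2}$ would have to be (a shift of) $P_1^*P_2^*$ or involve a genuine two-prime cancellation, and the detailed count in~\cite{LL00} rules out $t=2$ for a \emph{minimal} relation, forcing $t\ge 3$. Then the bound $\|D\|\ge(p_1-1)(p_2-1)+(p_3-1)$ and the equality characterization $D=(P_1^*P_2^*+P_3^*)h$ are exactly the content of~\cite[Proposition 6.2]{LL00}; I would invoke it, after verifying that the normalization $D=D'h$ with $D'\in\N[C_k]$ matches the hypotheses there (in particular that our "c-exponent" for a minimal v-sum is their "exponent").

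The main obstacle is purely expository rather than mathematical: making sure the notion of c-exponent in Definition~\ref{def-cexp}, when specialized to a \emph{minimal} v-sum, genuinely agrees with the reduced exponent of Definition~\ref{def-grexp} and with the "exponent" used in~\cite{LL00}, so that the cited theorems apply verbatim. Once that identification is in place — and it follows immediately because a minimal v-sum admits only the trivial one-term decomposition — both (a) and (b) are restatements of~\cite[Theorem 4.8, Proposition 6.2]{LL00} transported along the shift $h$, with the $t\ge 3$ assertion extracted from the impossibility of a minimal two-prime relation on $C_{p_1p_2}$.
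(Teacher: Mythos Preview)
Your proposal is correct and matches the paper's treatment: the paper does not prove this proposition at all but simply records it as a citation from \cite[Theorem~4.8, Proposition~6.2]{LL00}, which is exactly what you propose to do after the normalization via Lemma~\ref{lem-dmini}. Your observation that the c-exponent of a minimal v-sum coincides with its reduced exponent (since the decomposition in Definition~\ref{def-cexp} is forced to be trivial) is the one bridging remark needed to apply the cited results verbatim, and you have it.
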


\begin{remark}\label{rmk-k}
  It follows from Proposition~\ref{prop-nmini} that either $k$ is a prime or $k$ has at least three prime factors. 
\end{remark}

\section{New nonexistence results of $(m,n)$-GBFs}\label{sec-main} 

In this section, we derive some new necessary conditions on $(m,n)$-GBFs, and then give new nonexistence results accordingly. First we fix the following notation. As before, we assume that $g$ is the generator of $C_m$, and note that Remark~\ref{rmk-not} holds for any GBF $f$. To avoid confusion, we set $g^0$ as the identity element of $C_m$. 

The following result is very important, in the sense that it allows to eliminate all prime factors of $m$ greater than $2^n$ when deriving nonexistence results.

\begin{proposition}\label{prop-ggbf}
  Suppose that $f$ is an $(m,n)$-GBF and $m=\prod_{i=1}^s p_i^{\alpha_i}$ where $p_i$'s are distinct primes. Let  $k_x$ be the c-exponent of $E_x$ (as defined by (\ref{eqn-dfex})) for each $1_G\neq x\in G$. 
 Set 
 \[ I=\{1 \leq i \leq s: p_i \nmid k_x \ \forall x\in G\} \mbox{ and } \overline{m} = \prod_{i\notin I} p_i^{\alpha_i}.\]
 Then there exists an $(\overline{m}, n)$-GBF. In particular, if $p_i|m$ and $p_i > 2^n$, then there exists an $(m/p_i, n)$-GBF.
  
\end{proposition}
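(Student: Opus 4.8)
The plan is to realize the projection of $B_f$ onto $\Z[\zeta_{\overline m}][G]$ as the character image of $D_f$ under a character of $C_m$ of order $\overline m$, and then show this projected element still satisfies the defining group ring equation \eqref{eqn-gbfbf}. First I would write $m = \overline m \cdot m^*$ where $m^* = \prod_{i \in I} p_i^{\alpha_i}$, and let $\pi: C_m \to C_{\overline m}$ be the natural projection sending $g$ to a generator $\bar g$ of $C_{\overline m}$; extend $\pi$ coefficientwise to $\pi: \Z[C_m][G] \to \Z[C_{\overline m}][G]$. Set $\bar D := \pi(D_f) = \sum_{x \in G} \bar g^{f(x)} x$. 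Equivalently, if $\bar\tau$ is a character of $C_m$ of order exactly $\overline m$ and we identify $\bar g$ with $\zeta_{\overline m}$, then $\bar D$ corresponds to the group ring element $\sum_x \zeta_{\overline m}^{f(x)} x$, which is exactly $B_{\bar f}$ for the function $\bar f := f \bmod \overline m : G \to \Z_{\overline m}$. So the goal reduces to showing $B_{\bar f} B_{\bar f}^{(-1)} = 2^n$, i.e. that $\bar f$ is an $(\overline m, n)$-GBF.

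Next I would compute $\bar D \bar D^{(-1)}$. From \eqref{eqn-dfex} we have $D_f D_f^{(-1)} = \sum_{x \in G} E_x x$, and applying $\pi$ gives $\bar D \bar D^{(-1)} = \sum_{x \in G} \pi(E_x) x$, where here $\pi$ denotes the natural projection $\Z[C_m] \to \Z[C_{\overline m}]$. For $x = 1_G$ we have $E_{1_G} = 2^n$, so $\pi(E_{1_G}) = 2^n$. The crux is to show $\pi(E_x) = 0$ for all $x \ne 1_G$. Fix such an $x$. By Lemma~\ref{lem-ex}(b), $E_x$ is a v-sum in $\N[C_m]$ (its image under a character of order $m$ vanishes), with c-exponent $k_x$. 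By the definition of $I$, every prime $p_i$ with $i \in I$ satisfies $p_i \nmid k_x$; equivalently, writing $k_x = \prod_{p} p$ over its prime factors (it is square free by Lemma~\ref{lem-exp}(a) and Corollary~\ref{coro-rr}), all prime factors of $k_x$ lie among $\{p_i : i \notin I\}$, hence $k_x \mid \overline m$ and in particular $\prod_{p \mid k_x} p^{\alpha} \mid \overline m \mid m$. Now apply Lemma~\ref{lem-exp}(c) with $d = \overline m$ and $\phi = \pi$: since every prime power $p_i^{\alpha_i}$ for $p_i \mid k_x$ divides $\overline m$, the hypothesis $\prod p_i^{\alpha_i} \mid d$ is met, so $\chi(\pi(E_x)) = 0$ for every character $\chi$ of $C_{\overline m}$ of order exactly $\overline m$. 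That handles the primitive character; to conclude $\pi(E_x) = 0$ as a group ring element I would need that its image under every character of $C_{\overline m}$ vanishes, which is not immediate from (c) alone — so actually I would instead argue directly: by Lemma~\ref{lem-exp}(b), $E_x = \sum_{p \mid k_x} P_p F_p$ where $P_p \le C_m$ is the subgroup of order $p$ and $F_p \in \Z[C_m]$. Applying $\pi$, and noting $p \mid \overline m$ so $\pi$ maps $P_p$ isomorphically onto the order-$p$ subgroup of $C_{\overline m}$, we get $\pi(E_x) = \sum_{p \mid k_x} \pi(P_p)\pi(F_p)$, which is still annihilated by the principal character is not what we want — rather, each $\pi(P_p)$ is killed by any character of $C_{\overline m}$ nontrivial on the order-$p$ subgroup. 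This still does not give $\pi(E_x)=0$.

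To repair this cleanly I would go back to the group ring identity: $\bar D \bar D^{(-1)} = \sum_x \pi(E_x) x$, and test it with a character $\bar\chi \otimes \chi_y$ where $\bar\chi$ has order exactly $\overline m$ on $C_{\overline m}$ and $\chi_y$ ranges over $\hat G$. We get $\bar\chi\chi_y(\bar D \bar D^{(-1)}) = |\bar\chi\chi_y(\bar D)|^2 = \sum_x \bar\chi(\pi(E_x))(-1)^{y\cdot x} = 2^n + \sum_{x \ne 1_G}\bar\chi(\pi(E_x))(-1)^{y\cdot x}$, and by Lemma~\ref{lem-exp}(c) every term with $x \ne 1_G$ vanishes, so $|\bar\chi\chi_y(\bar D)|^2 = 2^n$ for all $y$. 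Since $\bar\chi$ (identified with $\zeta_{\overline m}$) sends $\bar D$ to $B_{\bar f}$, this says $|\chi_y(B_{\bar f})|^2 = 2^n$ for all $y \in G$, i.e. $\bar f$ is an $(\overline m, n)$-GBF by \eqref{eqn-chibf2} and Proposition~\ref{prop-gbf} — exactly the conclusion. Finally, for the "in particular" clause: if $p_i \mid m$ and $p_i > 2^n$, I claim $p_i \nmid k_x$ for all $x$, so $i \in I$ and we may take $\overline m = m/p_i^{\alpha_i}$ (a fortiori there is an $(m/p_i, n)$-GBF by applying the main statement once and iterating, or by the same projection argument dividing out one factor of $p_i$). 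The claim follows from Corollary~\ref{coro-rr} (or Lemma~\ref{lem-exp}(a)): if $p_i \mid k_x$ then $\|E_x\| \ge 2 + (p_i - 2) = p_i > 2^n$; but $\|E_x\| \le \|D_f D_f^{(-1)}\|$-type bounds give $\|E_x\| \le 2^n$ since $E_x$ is a sum of $2^n$ terms each a root of unity coefficient... more carefully, $E_x = \sum_{y \in G} g^{f(y+x)-f(y)}$ is a sum of $2^n$ group elements of $C_m$, so $\|E_x\| \le 2^n$, contradiction.

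The main obstacle I expect is the gluing step: ensuring that killing $E_x$ under the single primitive character of order $\overline m$ (which is all Lemma~\ref{lem-exp}(c) directly provides) suffices, and the right move is not to prove $\pi(E_x) = 0$ as a ring element but to feed the primitive-character information directly into the squared-modulus computation for $B_{\bar f}$, as above. A secondary technical point is bookkeeping the prime powers: one must check $\prod_{p \mid k_x} p^{\alpha_p} \mid \overline m$, which holds because $\overline m = \prod_{i \notin I} p_i^{\alpha_i}$ collects full prime powers, and every prime dividing any $k_x$ is by definition outside $I$.
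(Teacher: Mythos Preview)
Your proposal is correct and follows essentially the same strategy as the paper: project $D_f$ along the natural map to a smaller cyclic group, invoke Lemma~\ref{lem-exp}(c) to kill $\bar\chi(\pi(E_x))$ for $x\ne 1_G$ under a character $\bar\chi$ of full order, and read off the GBF condition $|\chi_y(B_{\bar f})|^2=2^n$; the ``in particular'' clause is handled in both cases via Lemma~\ref{lem-exp}(a) together with $\|E_x\|=2^n$.

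The only organizational difference is that the paper argues by induction, peeling off one prime $p_i\in I$ at a time (projecting $C_m\to C_{m/p_i}$ and using $d=m/p_i$ in Lemma~\ref{lem-exp}(c)), whereas you project in one shot to $C_{\overline m}$ with $d=\overline m$. Your direct route is arguably cleaner, since it sidesteps having to check that the set $I$ is inherited by the intermediate GBFs. Your self-correction---that one should \emph{not} try to prove $\pi(E_x)=0$ as a group-ring element but instead feed $\bar\chi(\pi(E_x))=0$ straight into the squared-modulus identity---is exactly the right observation, and matches what the paper does implicitly when it says ``$\tau(\eta(D_f))$ gives rise to an $(m/p_i,n)$-GBF.'' One small point worth making explicit in a final write-up: for the ``in particular'' clause, the main statement literally yields an $(\overline m,n)$-GBF with $\overline m\mid m/p_i$, and one then lifts to an $(m/p_i,n)$-GBF via the standard embedding $\Z_{\overline m}\hookrightarrow \Z_{m/p_i}$ (or, as you note, reruns the projection argument with $d=m/p_i$ directly).
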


\begin{proof}
 By induction, it suffices to show that if $p_i\in I$, then there exists an $(m/p_i, n)$-GBF. 
 Let $\eta: \Z[\langle g \rangle] \rightarrow \Z[\langle g^{p_i} \rangle]$ be the natural projection, it then follows that
$$
\eta(D_f) \eta(D_f)^{(-1)} = 2^n + \sum_{1_G \ne x \in G} \eta(E_x) x . 
$$
Recall that $E_x$ is a v-sum. By assumption $p_i$ does not divide $k_x$ for all $1_G \neq x \in G$. It follows from Lemma~\ref{lem-exp} (c) that $\tau(\eta(E_x)) = 0$ if $\tau$ is a character of order $m/p_i$. Therefore, $\tau(\eta(D_f))$ gives rise to an $(m/p_i,n)$-GBF.  

The last statement is now clear as if $p_i>2^n$, then by Lemma~\ref{lem-exp} (a), $p_i$ does not divide $k_x$ for any $1_G\neq x\in G$. 

\end{proof}

We record the following result which will be used from time to time later.

\begin{lemma}\label{lem-ab}
Suppose that $f$ is an $(m,n)$-GBF, and $p,q$ are distinct primes that both divide $m$. Then 
there exist $y\neq 1_G$ and $h\in supp(E_y)$ such that $pq|\circ(h)$.  
\end{lemma}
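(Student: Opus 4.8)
The plan is to argue by contradiction: suppose that for every $y \neq 1_G$ and every $h \in supp(E_y)$, the order $\circ(h)$ is \emph{not} divisible by $pq$. I want to show this forces $f$ to collapse, via Proposition~\ref{prop-ggbf}, to a GBF over a modulus not divisible by $p$ (or by $q$), which contradicts the standing assumption that $C_m = \langle \{ g^{f(x)} : x \in G \} \rangle$ together with $p \mid m$. More concretely, the key observation is this. Each $E_x$ (for $x \neq 1_G$) is a v-sum in $\N[C_m]$, so by Lemma~\ref{lem-exp}(b) we may write $E_x = \sum_{i=1}^{t_x} P_i^{(x)} F_i^{(x)}$ where the $P_i^{(x)}$ run over subgroups of prime order $p_i$ for $p_i \mid k_x$, the c-exponent of $E_x$. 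I want to show that under the contradiction hypothesis, $p \nmid k_x$ for all $x \neq 1_G$; then Proposition~\ref{prop-ggbf} (with $I \ni$ the index of $p$) yields an $(m/p, n)$-GBF whose image still generates, but $g^{f(x)}$ takes values generating $C_m$ — giving the contradiction once one checks the generation hypothesis is preserved, which it is because the natural projection $\eta$ is surjective and the image of $D_f$ still involves a generator of $C_{m/p}$ only if the original values did.

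The link between $supp(E_y)$ and the c-exponent is the heart of the matter. If $p \mid k_y$ for some $y \neq 1_G$, then in the minimal decomposition $E_y = \sum D_j$ achieving the c-exponent, some minimal v-sum $D_j$ has $p \mid k_j$, its reduced exponent. By Lemma~\ref{lem-dmini}, $D_j = D_j' h_j$ with $D_j' \in \N[C_{k_j}]$ and $h_j \in C_m$; since $p \mid k_j$, the subgroup $C_{k_j}$ contains an element of order $p$, and in fact by Proposition~\ref{prop-nmini} the support of $D_j'$ (hence of $D_j$, up to the translate $h_j$) spans a coset structure forcing elements whose orders are divisible by every prime factor of $k_j$ — in particular by $p$. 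Thus some $h \in supp(E_y)$ has $p \mid \circ(h)$. Now I need the symmetric statement for $q$ simultaneously: the real content of the lemma is that \emph{the same} $h$ is divisible by $pq$, not just that $p$ and $q$ each divide \emph{some} support element. So the argument must produce a $y$ for which $k_y$ is divisible by \emph{both} $p$ and $q$, and then extract from Proposition~\ref{prop-nmini}(b) (the three-prime case, since $k_y$ then has at least the two factors $p, q$, hence by Remark~\ref{rmk-k} at least three) a support element divisible by $pq$.

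The structure of the full argument is therefore: (1) Show that if $p \nmid k_x$ for all $x \neq 1_G$, then Proposition~\ref{prop-ggbf} produces an $(m/p,n)$-GBF, contradicting minimality of $m$ (the assumption $C_m = \langle g^{f(x)} \rangle$); hence there exists $y_p \neq 1_G$ with $p \mid k_{y_p}$, and likewise $y_q$ with $q \mid k_{y_q}$. (2) Upgrade this to a single $y$ with $pq \mid k_y$: here I would use that $\eta(D_f)\eta(D_f)^{(-1)}$ must still satisfy the GBF equation after projecting away \emph{one} of $p, q$, and track which $E_x$ survive; if no single $E_y$ had $pq \mid k_y$, then projecting modulo $m/p$ kills the $p$-part everywhere that $q$ appears and vice versa, and iterating Proposition~\ref{prop-ggbf} eliminates both $p$ and $q$, again contradicting minimality. (3) Given $y$ with $pq \mid k_y$, apply Lemma~\ref{lem-exp}(b) and Proposition~\ref{prop-nmini} to the minimal v-sum in $E_y$ carrying both primes, reading off a support element $h$ with $pq \mid \circ(h)$. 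The main obstacle I anticipate is step (2): separating ``$p$ divides some c-exponent and $q$ divides some c-exponent'' from ``$pq$ divides one c-exponent'' requires care, because the decompositions into minimal v-sums are not unique and different $E_x$ could a priori carry $p$ and $q$ independently; the resolution should be a simultaneous application of Proposition~\ref{prop-ggbf} showing that the obstruction to removing $\{p,q\}$ together must live in a single $E_y$.
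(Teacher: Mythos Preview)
Your route is far more elaborate than the paper's and, as written, does not close. The paper's proof is a direct five-line construction that never touches c-exponents or Proposition~\ref{prop-ggbf}. Using the conventions of Remark~\ref{rmk-not} (namely $f(1_G)=0$ and $C_m=\langle g^{f(x)}:x\in G\rangle$), one picks $u,v\in G$ with $p\mid\circ(g^{f(u)})$ and $q\mid\circ(g^{f(v)})$. Since $E_u=\sum_{y}g^{f(yu)-f(y)}$ and the term $y=1_G$ contributes $g^{f(u)}$, we have $g^{f(u)}\in supp(E_u)$; likewise $g^{f(v)}\in supp(E_v)$. If either of these already has order divisible by $pq$, we are done. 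Otherwise $q\nmid\circ(g^{f(u)})$ and $p\nmid\circ(g^{f(v)})$, so $pq\mid\circ(g^{f(u)-f(v)})$; taking $y=v$ in the sum defining $E_{uv}$ (and using $v^2=1_G$, $uv\neq 1_G$) shows $g^{f(u)-f(v)}\in supp(E_{uv})$. That is the whole argument.

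Your proposal has two structural problems beyond the complexity. First, the intended contradiction in step~(1) is not a contradiction: Proposition~\ref{prop-ggbf} manufactures a \emph{new} function (arising from $\eta(D_f)$) that is an $(m/p,n)$-GBF, and the existence of such a function in no way conflicts with the hypothesis that the original $f$ satisfies $C_m=\langle g^{f(x)}\rangle$ with $p\mid m$. There is no minimality hypothesis on $m$ in the lemma to violate. Second, step~(3) is false as stated: if $E_y=P+Q$ with $P,Q$ the subgroups of orders $p,q$, then the c-exponent of $E_y$ is $pq$, yet every element of $supp(E_y)=P\cup Q$ has order dividing $p$ or dividing $q$, so none has order divisible by $pq$. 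Thus ``$pq\mid k_y$'' does not by itself yield an $h\in supp(E_y)$ with $pq\mid\circ(h)$, and neither Lemma~\ref{lem-exp}(b) nor Proposition~\ref{prop-nmini} supplies that conclusion. Step~(2), which you already flag as the main obstacle, is likewise unresolved for the same reason: nothing in Proposition~\ref{prop-ggbf} forces two primes occurring in different $k_x$'s to coalesce into one.
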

\begin{proof} 
As $C_m=\langle \{ g^{f(x)}: x\in G\}\rangle$, there exist $u,v\in G$ such that
$p|\circ(g^{f(u)})$ and $q|\circ(g^{f(v)})$. 
Since $g^{f(1_G)}=g^0 \in C_m$, we know that $ g^{f(u)}\in supp(E_u)$ and $ g^{f(v)}\in supp(E_v)$. 
We are done if $q|\circ(g^{f(u)})$ or $p|\circ(g^{f(v)})$. Otherwise, 
$ug^{f(u)}(vg^{-f(v)})\in supp(E_{uv})$ and then clearly $pq|\circ (g^{f(u)-f(v)})$. The proof is completed.  
\end{proof}



Before we proceed, we need a technical result.

\begin{lemma}\label{lem-q1q2q3}
Let  $q_1,q_2,q_3$ be primes that divide $m$ and $Q_1,Q_2,Q_3$ be subgroups of order $q_1,q_2,q_3$, respectively. Suppose that $4\nmid m$ and $\sum_{i=1}^t Q_ih_i=\sum_{i=1}^t Q_ih_i^{-1}$ for some   $h_1,h_2,h_t\in C_m$ with $t \ge 2$. 
\begin{itemize}
\item [(a)] If $q_1\neq q_2$ and $t=2$, then we may assume $h_i^{-1}=h_i$ for $i=1,2$. 
\item [(b)] If $q_1\neq q_2=q_3$ and $t=3$, then we may assume $Q_2h_2+Q_2h_3=Q_2(h_2+h_2^{-1})$ and $h_1=h_1^{-1}$.
\item [(c)] If all $q_i$'s are distinct, then we may assume $h_i=h_i^{-1}$ for all $i$.
\end{itemize}
\end{lemma}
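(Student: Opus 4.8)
The plan is to rewrite the hypothesis as a single vanishing element of $\Z[C_m]$ and then read the three conclusions off its coefficient structure, invoking $4\nmid m$ only at the last step. Since each $Q_i$ is a subgroup, $(Q_ih_i)^{(-1)}=Q_ih_i^{-1}$, so $\sum_{i=1}^tQ_ih_i=\sum_{i=1}^tQ_ih_i^{-1}$ is equivalent to $\sum_{i=1}^tB_i=0$ in $\Z[C_m]$ with $B_i:=Q_ih_i-Q_ih_i^{-1}$. I would first record the dichotomy that drives everything: either $Q_ih_i=Q_ih_i^{-1}$ (equivalently $h_i^2\in Q_i$) and then $B_i=0$; or $Q_ih_i$ and $Q_ih_i^{-1}$ are disjoint cosets, and then $B_i$ has exactly $2q_i$ nonzero coefficients, all $\pm1$ and supported on those two cosets, so $\|B_i\|=|\mathrm{supp}(B_i)|=2q_i$. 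In each part the work is to force the relevant $B_i$ to vanish, and then to replace ``$h_i^2\in Q_i$'' by ``$h_i=h_i^{-1}$''.

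For (a), $B_1=-B_2$ gives $\|B_1\|=\|B_2\|$; since $\|B_1\|\in\{0,2q_1\}$, $\|B_2\|\in\{0,2q_2\}$ and $q_1\ne q_2$, both must be $0$, so $Q_ih_i=Q_ih_i^{-1}$ for $i=1,2$. For (b), note $Q_3=Q_2$ (a cyclic group has a unique subgroup of order $q_2=q_3$), so $B_2+B_3=Q_2(h_2-h_2^{-1}+h_3-h_3^{-1})$ lies in the left ideal $Q_2\Z[C_m]$, hence so does $B_1=-(B_2+B_3)$; at the same time $B_1=Q_1(h_1-h_1^{-1})\in Q_1\Z[C_m]$. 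Therefore $B_1$ is constant on cosets of $Q_1$ and on cosets of $Q_2$, hence on cosets of the order-$q_1q_2$ subgroup $Q_1Q_2$. If $B_1\ne0$ it is constant and nonzero on a coset of $Q_1Q_2$, which has $q_1q_2$ elements, whereas $|\mathrm{supp}(B_1)|=2q_1$; this forces $q_2=2$, and then that coset is exactly $Q_1h_1\sqcup Q_1h_1^{-1}$, on which $B_1$ takes both values $+1$ and $-1$ — a contradiction. Hence $B_1=0$. Then $B_2+B_3=0$, i.e. the multiset $\{Q_2h_2,Q_2h_3\}$ is inversion-invariant: either each $Q_2h_i$ is inversion-symmetric (and normalizes as below), or $Q_2h_3=Q_2h_2^{-1}$, in which case replacing $h_3$ by $h_2^{-1}$ gives $Q_2h_2+Q_2h_3=Q_2(h_2+h_2^{-1})$, while $h_1$ normalizes to an involution.

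For (c), with $q_1,q_2,q_3$ pairwise distinct, I would show all three $B_i$ vanish. Since $B_i=-B_j$ would force $q_i=q_j$, and $B_i=-(B_j+B_k)=0$ whenever the other two vanish, either all $B_i=0$ or all $B_i\ne0$. In the latter case any point lying only in $\mathrm{supp}(B_1)$ would survive in $\sum B_i=0$, so $\mathrm{supp}(B_1)\subseteq\mathrm{supp}(B_2)\cup\mathrm{supp}(B_3)$ and thus $2q_1\le|\mathrm{supp}(B_1)\cap\mathrm{supp}(B_2)|+|\mathrm{supp}(B_1)\cap\mathrm{supp}(B_3)|$; each $\mathrm{supp}(B_i)$ is a union of two cosets of $Q_i$, and for $i\ne j$ a coset of $Q_i$ meets a coset of $Q_j$ in at most $|Q_i\cap Q_j|=1$ point, so each intersection has $\le4$ elements, forcing $q_1\le4$ and symmetrically $q_2,q_3\le4$ — impossible for three distinct primes. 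So every $B_i=0$, i.e. $Q_ih_i=Q_ih_i^{-1}$ for $i=1,2,3$.

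It then remains to normalize, which is the only place $4\nmid m$ enters. If $h^2\in Q$ for a prime-order subgroup $Q\le C_m$, then $H:=\{x\in C_m:x^2\in Q\}$ is a subgroup containing $h$ (the preimage of $Q$ under squaring). If $m$ is odd, squaring is injective, so $H=Q$ and $h\in Q$. If $m=2m'$ with $m'$ odd, then $4\nmid m$ makes the involution $z$ a non-square (the squares of $C_2\times C_{m'}$ are $\{1\}\times C_{m'}$), so $x^2=z$ is unsolvable; it follows that $H=Q$ when $q=2$ and $H=Q\langle z\rangle$ when $q$ is odd, so $Qh\in\{Q,Qz\}$. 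In every case $Qh$ contains a representative $h'$ with $h'=(h')^{-1}$, and replacing each relevant $h_i$ by such a representative yields the assertions. (For $4\mid m$ this genuinely fails: in $C_4=\langle g\rangle$ with $Q=\langle g^2\rangle$ and $h=g$ one has $h^2\in Q$ but $Qh=\{g,g^3\}$ has no involution.) I expect the main obstacle to be the coefficient/support bookkeeping in parts (b) and (c): one must check that no cancellation among the $\pm1$-coefficients of the various $B_i$ occurs beyond what is explicitly accounted for, so that ``$\|B_i\|=2q_i$'' and the coset-intersection bounds remain valid. Some additional care is needed in (b) to read off the exact normal form from $B_2+B_3=0$. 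The normalization step is the conceptual heart of the lemma — it is the only place $4\nmid m$ is used — but is short once phrased via the squaring homomorphism.
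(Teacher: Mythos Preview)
Your argument is correct and takes a genuinely different, more elementary route than the paper's. The paper isolates the $Q_1$-term by applying a ring homomorphism $\phi:\Z[C_m]\to\C[C_m]$ that fixes the $q_1$-Sylow subgroup and sends its complement to a primitive root of unity, so that $\phi(Q_i)=0$ for $i\ge2$; it then reads off $g_1^2\in Q_1$ and $\phi(h')^2=1$ from the image equation and iterates on the residual relation $\sum_{i\ge2}Q_i(h_i^{-1}-h_i)=0$, reducing (c) to (a). You instead stay entirely inside $\Z[C_m]$: the dichotomy $\|B_i\|\in\{0,2q_i\}$ dispatches (a) by a one-line norm comparison; the ideal observation $B_1\in Q_1\Z[C_m]\cap Q_2\Z[C_m]$ (hence constant on $Q_1Q_2$-cosets of size $q_1q_2$, versus $|\mathrm{supp}(B_1)|=2q_1$) handles (b); and the coset-intersection bound $|\mathrm{supp}(B_i)\cap\mathrm{supp}(B_j)|\le4$ forces all $q_i\le4$ in (c), which is impossible for three distinct primes. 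Your normalization via the squaring map and the explicit description of $H=\{x:x^2\in Q\}$ is also cleaner than the paper's component-wise argument. What you gain is that no character theory is invoked at all; what the paper's approach buys is uniformity with the character machinery used throughout. One minor looseness both proofs share: in the sub-case of (b) where $Q_2h_2=Q_2h_2^{-1}$ and $Q_2h_3=Q_2h_3^{-1}$ with $Q_2h_2\ne Q_2h_3$ (possible when $m$ is even, e.g.\ $Q_2+Q_2z$ with $q_2$ odd), the literal form $Q_2(h_2+h_2^{-1})$ cannot be achieved; the paper writes ``we may assume $h_2=h_3=g^0$'' and you write ``normalizes as below'', neither of which covers this. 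Since part~(b) is only applied in the paper with $m$ odd (in Theorem~\ref{thm-main1}), this does not affect any downstream use.
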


\begin{proof} 
By assumption, we  have
\[ Q_1(h_1-h_1^{-1})=\sum_{i=2}^t Q_i(h_i^{-1}-h_i).\]

Suppose that $q_1^{\beta_1}|| m$. Let $\phi:\Z[C_m]\rightarrow \C[C_m]$ be a ring homomorphism that fixes $g^{m/q_1^{\beta_1}}$ and sends $g^{q_1^{\beta_1}}$ to an $m/q_1^{\beta_1}$-primitive root of unity. Then,  we have $\phi (Q_i(h_i-h_i^{-1}))=0$ for $i=2, \ldots, t$, which implies that $\phi(Q_1h_1-Q_1h_1^{-1})=0$. Write $h_1=g_1 h'$ with $g_1\in \langle g^{m/q_1^{\beta_1} }\rangle$ and $p_1\nmid \circ(h')$. 
Then, we have $ Q_1g_1\phi(h')=Q_1g_1^{-1}\phi(h'^{-1})$. 
Hence  $g_1^2\in Q_1$ and $\phi(h')=\phi(h'^{-1})$. 
If $q_1$ is odd, then $g_1=g^0$. If $q_1=2$, then as $4\nmid m$, $g_1$ can be taken as $g^0$ as well.
In both cases, we may assume $g_1 = g^0$. 
It follows that $\phi(h')^2=1$. As $\phi$ is of order $m/q_1^{\beta_1}$, $h'^2=g^0$. Therefore, $g_1h'=(g_1h')^{-1}$. 
Furthermore, we have 
\begin{equation}
\sum_{i=2}^t Q_i(h_i^{-1}-h_i)=0.
\end{equation}

Now (a) follows easily by applying the same argument on $Q_2$. 

If $t=3$ and $q_2=q_3$, we then obtain $Q_2(h_2+h_3)=Q_2(h_2^{-1}+h_3^{-1})$. If $Q_2h_2=Q_2h_2^{-1}$, then we must have $Q_2h_3=Q_2h_3^{-1}$. Then, $h_2^{2}\in Q_2$ and $h_3^2\in Q_2$. 
Using a similar argument as before, we may assume that $h_2=h_3=g^0$. 
If  $Q_2h_2=Q_2h_3^{-1}$, then clearly, we may take $h_3=h_2^{-1}$ and we are done. 

To obtain (c), we set $t = 3$. We then get our desired results by applying part (a) to Equation (7).

The proof is then completed. 
\end{proof}

Now we are able to give the following necessary conditions on the existence of $(m,n)$ GBFs, where $m$ is odd.   

\begin{theorem}\label{thm-main1}
Suppose that $m = \prod_{i = 1}^s p_i^{\alpha_i}$, where $3 \leq p_1 < p_2 < \cdots < p_s$ are odd primes and $\alpha_i$'s are all positive integers. If an $(m,n)$-GBF exists, then $s \geq 2$ and $3p_1 + p_2 \leq 2^n$.
\end{theorem}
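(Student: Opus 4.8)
The plan is to exploit the structure results already established for $E_x$ together with the counting bound in Corollary~\ref{coro-rr} (equivalently Lemma~\ref{lem-exp}(a)). First I would dispose of the case $s=1$: if $m=p_1^{\alpha_1}$ is an odd prime power, then by Lemma~\ref{lem-ab} (applied with a single prime, or more directly by observing that $C_m$ is generated by the $g^{f(x)}$) some $E_y$ with $y\neq 1_G$ is a nonzero v-sum whose c-exponent is divisible by $p_1$; but one also knows $E_{1_G}=2^n$, and a parity/size argument via $B_fB_f^{(-1)}=2^n$ forces $m\le 2^n$ type restrictions — more cleanly, Proposition~\ref{prop-ggbf} says that if $p_1>2^n$ then there is an $(m/p_1,n)=(1,n)$-GBF, which is absurd, and one checks the prime-power case cannot survive. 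So I expect $s\ge 2$ to follow by combining Lemma~\ref{lem-ab} (which guarantees, for the two largest primes $p_{s-1},p_s$, an index $y\neq 1_G$ and $h\in supp(E_y)$ with $p_{s-1}p_s\mid \ord(h)$, hence $E_y$ has c-exponent divisible by $p_{s-1}p_s$) with Remark~\ref{rmk-k}.

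The heart of the argument is the bound $3p_1+p_2\le 2^n$. Pick primes $p,q$ dividing $m$ and apply Lemma~\ref{lem-ab} to get $y\neq 1_G$ and $h\in supp(E_y)$ with $pq\mid \ord(h)$; then the c-exponent $k_y$ of $E_y$ is divisible by both $p$ and $q$, so $k_y$ has at least two prime factors, hence by Remark~\ref{rmk-k} at least three, say $p_1<p_2<p_3$ among the primes of $m$ (after choosing $p=p_1,q=p_2$ we may assume $p_1,p_2\mid k_y$). Now apply Lemma~\ref{lem-exp}(a) to the v-sum $E_y$: decomposing $E_y=\sum_i D_i$ into minimal v-sums and using Corollary~\ref{coro-rr} on each, I get
\[
\|E_y\|\ \ge\ 2+\sum_{p\in\cP(k_y)}(p-2)\ \ge\ 2+(p_1-2)+(p_2-2)+(p_3-2).
\]
On the other hand $\|E_y\|\le 2^n$: indeed $\tau(E_{1_G})=2^n$ and, summing the absolute values of coefficients, one shows $\|E_y\|\le \|E_{1_G}\|$ using that $E_x=\sum_{w}g^{f(w+x)-f(w)}$ has all coefficients nonnegative with total $\sum_w 1 = 2^n$ (each $E_x$ is a sum of $2^n$ group elements of $C_m$, so $\|E_x\|\le 2^n$ automatically). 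Combining, $2+(p_1-2)+(p_2-2)+(p_3-2)\le 2^n$, i.e. $p_1+p_2+p_3-4\le 2^n$. Since $p_3\ge p_1$ is not quite enough, I would instead be more careful: choose $q$ so that $k_y$ is divisible by $p_1$ and by \emph{two} further primes — using Proposition~\ref{prop-nmini}(b), a minimal v-sum with composite c-exponent has at least three prime factors, and tracking which primes appear lets me force $k_y$ divisible by $p_1$ and by $p_j,p_\ell$ with $j,\ell\ge 1$, but to get exactly $3p_1+p_2$ I want the bound $2+2(p_1-2)+(p_2-2)$, which comes from a minimal v-sum whose c-exponent is divisible by $p_1$ with multiplicity reflected through Proposition~\ref{prop-nmini}(b)'s sharper estimate $\|D\|\ge (p_1-1)(p_2-1)+(p_3-1)$ — no, rather from the observation that $E_y$ must contain a \emph{pair} of minimal v-sums or a single one contributing $\ge 3p_1-4+p_2$; here is where Lemma~\ref{lem-q1q2q3} and the symmetry $E_y=E_y^{(-1)}$ enter to control the shape of the minimal pieces.

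So the clean route I would actually pursue: using $pq\mid\ord(h)$ for $p=p_1$, $q=p_2$ gives a v-sum $E_y$ of c-exponent divisible by $p_1p_2$; Remark~\ref{rmk-k} upgrades this so $k_y$ has a prime factor $p_3$ too (third smallest available, so $p_3\ge$ the third prime of $m$, but at worst $p_3\ge p_2$). The self-conjugacy $E_y=E_y^{(-1)}$ together with Lemma~\ref{lem-q1q2q3} (the case $4\nmid m$, automatic since $m$ is odd) forces the minimal v-sums in a minimal decomposition of $E_y$ to come in conjugate-symmetric configurations, which doubles the contribution of the smallest prime $p_1$: one obtains $\|E_y\|\ge 2+2(p_1-2)+(p_2-2)=3p_1+p_2-4$, no wait — $2+2(p_1-2)+(p_2-2)=2p_1+p_2-4$; to land on $3p_1+p_2$ I need an extra $p_1$, supplied by the term $E_{1_G}=2^n$ sitting at the identity forcing an additional $P_1$-translate, or by applying Lemma~\ref{lem-exp}(b) to write $E_y=\sum P_iE_i$ and counting a $P_1$ appearing at least twice. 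The main obstacle, and where I would spend the most effort, is precisely this bookkeeping: extracting the coefficient $3$ in front of $p_1$ rather than $2$, which requires showing the minimal decomposition of $E_y$ cannot be economized — i.e. ruling out that all of $E_y$'s mass near the prime $p_1$ is carried by a single $P_1$-coset — and this is where the self-duality $E_x=E_x^{(-1)}$, the evenness of the $g^0$-coefficient from Lemma~\ref{lem-ex}(a), and Lemma~\ref{lem-q1q2q3}(a)/(c) are combined.
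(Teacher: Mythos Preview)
Your proposal has a fundamental logical gap at its core. You claim that if $h\in supp(E_y)$ with $pq\mid\ord(h)$, then the c-exponent $k_y$ of $E_y$ is divisible by $pq$. This does not follow: the c-exponent is the $\lcm$ of \emph{reduced} exponents of the minimal pieces, and the reduced exponent of a minimal v-sum $D$ measures only the orders of ratios of elements in $supp(D)$, not the orders of the elements themselves. For instance $D=P_3\cdot h$ with $\ord(h)=35$ has reduced exponent $3$, yet every element of its support has order divisible by $35$. Your appeal to Remark~\ref{rmk-k} is likewise misplaced: that remark concerns the reduced exponent of a single \emph{minimal} v-sum, not the c-exponent of a general v-sum; a c-exponent equal to $p_1p_2$ is perfectly possible (take $E_y=P_1h_1+P_2h_2$). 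Even granting your chain of inferences, Lemma~\ref{lem-exp}(a) would only yield $p_1+p_2+p_3-4\le 2^n$, which you yourself note falls short of $3p_1+p_2$; the subsequent attempts to recover the missing $p_1$ via self-conjugacy ``doubling'' do not work as written, since $E_y=E_y^{(-1)}$ does not force an extra $P_1$-coset in the decomposition. Your treatment of $s=1$ is also incomplete: Proposition~\ref{prop-ggbf} only handles $p_1>2^n$, whereas the paper simply observes that $E_x=P_1W$ forces $p_1\mid \|E_x\|=2^n$.

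The paper's argument is structurally different. One takes an \emph{arbitrary} $x\ne 1_G$, writes $E_x=\sum_j D_j$ with $D_j$ minimal, and does a case analysis on the $D_j$. If some $D_j$ has reduced exponent with three or more prime factors, then the sharper bound of Proposition~\ref{prop-nmini}(b) (not just Corollary~\ref{coro-rr}) gives $\|D_j\|\ge (p_1-1)(p_2-1)+(p_3-1)$, which one checks is $\ge 3p_1+p_2$; four or more prime factors is even easier. Otherwise every $D_j$ is a coset $Q_jh_j$ of a prime-order subgroup. They cannot all use the same prime (else that prime divides $\|E_x\|=2^n$), so with $q_1\ne q_2$ one has $2^n=\sum_j q_j\ge q_1+q_2+(t-2)p_1\ge p_1+p_2+(t-2)p_1$, and $t\ge 4$ already gives $2^n\ge 3p_1+p_2$. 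The cases $t=3$ and $t=2$ are then eliminated separately: for $t=3$, Lemma~\ref{lem-q1q2q3} together with $m$ odd forces the coefficient of $g^0$ in $E_x$ to be $1$ or $3$, contradicting Lemma~\ref{lem-ex}(a); and if $t=2$ held for \emph{every} $x\ne 1_G$, then each $E_x=Q_1+Q_2$ would have only prime-order elements in its support, contradicting Lemma~\ref{lem-ab}. Note that Lemma~\ref{lem-ab} enters at the \emph{end}, to kill the last surviving case, not at the start to single out a special $y$; and the coefficient $3$ in front of $p_1$ comes from the pigeonhole ``$t\ge 4$ pieces, each of size $\ge p_1$, two of them distinct primes'' rather than from any self-conjugacy doubling.
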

 

\begin{proof}
Recall that if $1_G\neq x \in G$ and $\chi$ is a character of order $m$,  then $\chi(E_x) = 0$. 
If $s=1$, then by Lemma~\ref{lem-exp} (b), $E_x=P_1 W$ where $P_1$ is a subgroup of order $p_1$ and $W\subseteq C_m$. In other words, $2^n=||E_x||=p_1||W||$. This is impossible as $p_1\neq 2$. 

Next, we assume that $s\geq 2$.  As $E_x \in \N[C_m]$, we may write $E_x=\sum D_j$ such that all $D_j$'s are minimal v-sums. Let $k_j$ be the reduced exponent of $D_j$. If $|{\cP}(k_j)|\geq 4$, then by Corollary~\ref{coro-rr}, we have $||D_j||\geq 2+ \sum_{i=1}^4 (p_i-2) \geq 3p_1+p_2$. Thus, we may assume that $|{\cP}(k_j)|\leq 3$. But by Proposition~\ref{prop-nmini},  $|{\cP}(k_j)|=1$ or $3$. In case that $|{\cP}(k_j)|=3$, $||D_j||\geq q_1(q_2-1)+q_3-q_2\geq p_1(p_2-1)+p_3-p_2$. If $p_1\geq 5$, then clearly, $p_1(p_2-1)+p_3-p_2\geq 3p_1+p_2$. If $p_1=3$, it then follows that  
$$
p_1(p_2-1)+p_3-p_2\geq 2p_2+(p_3-2)\geq p_2+(5+7)\geq 3p_1+p_2
$$
as $p_2\geq 5$ and $p_3\geq 7$. 

It remains to consider the case $|{\cP}(k_j)|=1$, i.e., $D_j= Q_j h_i$ where $h_i\in C_m$ and $Q_j$ is a subgroup of order $q_j$. Note that $q_j$'s need not be distinct. Therefore, $E_x=\sum_{j=1}^t Q_j h_j$. If all $Q_j$'s are the same, then $E_x=Q_1Y$ for some $Y\in \Z[C_m]$. This is impossible as $q_1\nmid 2^n$. In particular, it follows that $t\geq 2$ and we may assume  $Q_1\neq Q_2$ without loss of generality.  Recall that all $D_i\in \N[C_m]$. Therefore, 
\[ 2^n=||E_x|| \geq q_1+q_2+(t-2)p_1.\]
Hence, we are done if $t \geq 4$.  

We first study the case $t = 3$. As $q_1\neq q_2$, we may assume $q_1\neq q_3$ as well. 
Since $E_x^{(-1)}=E_x$ and $m$ is odd, we may then assume $h_1=g^0$. 
Moreover, if $Q_2=Q_3$, then $Q_2h_2+Q_2h_3=Q_2(h_2+h_2^{-1})$. Whereas if $Q_2\neq Q_3$, then $h_2=h_3=1_G$ as $m$ is odd. Therefore, the coefficient of $g^0$ is either $1$ or $3$ in both cases. 
This contradicts Lemma~\ref{lem-ex} (a). 

Thus, we may assume $t = 2$ for all $x \ne 1_G$. Moreover, as $m$ is odd, $E_x$ is of the form $Q_1+Q_2$. In particular, each non-identity element in $supp(E_x)$ is of prime order. This contradicts Lemma~\ref{lem-ab}.   

The proof is then completed. 
\end{proof}

The theorem above provides an alternative proof of~\cite[Corollary 2]{LFF17}, from which we can have an improved result on the case $s = 2$.

\begin{corollary}\label{coro-main1}
  Suppose that $m = \prod_{i = 1}^s p_i^{\alpha_i}$, where $p_1 < p_2 < \cdots < p_s$ are odd primes and $\alpha_i$'s are all positive integers.
\begin{itemize}
  \item[(a)] There is no $(m,n)$-GBF when $s = 1$. 
  \item[(b)] There is no $(m,n)$-GBF if $s \geq 2$ and $3p_1 + p_2 > 2^n$.
  \item[(c)] There is no $(m,n)$-GBF if there is no $(\prod_{i=1}^r p_i^{\alpha_i}, n)$-GBF where $p_{r+1}$ is the smallest prime such that $p_1 + p_{r+1} > 2^n$. 
\end{itemize} 
\end{corollary}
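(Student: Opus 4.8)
The plan is to treat each part of Corollary~\ref{coro-main1} as a direct consequence of Theorem~\ref{thm-main1} and Proposition~\ref{prop-ggbf}, with essentially no new computation required. Parts (a) and (b) are immediate: if an $(m,n)$-GBF exists with $s=1$, Theorem~\ref{thm-main1} forces $s\geq 2$, a contradiction; and if $s\geq 2$ and $3p_1+p_2>2^n$, the same theorem is violated since its conclusion asserts $3p_1+p_2\leq 2^n$. So for (a) and (b) I would simply invoke Theorem~\ref{thm-main1} and note the contrapositive.

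For part (c), the approach is to use Proposition~\ref{prop-ggbf} to strip off the large prime factors of $m$. First I would record the setup: suppose, for contradiction, that an $(m,n)$-GBF $f$ exists, where $m=\prod_{i=1}^s p_i^{\alpha_i}$ and $p_{r+1}$ is the smallest prime (among $p_1,\dots,p_s$, or more safely among all primes exceeding $p_r$) with $p_1+p_{r+1}>2^n$. The key observation is that for every $i$ with $i\geq r+1$ we have $p_i\geq p_{r+1}$, hence $p_1+p_i>2^n$, and therefore $p_i$ cannot be paired with $p_1$ inside the reduced exponent of any $E_x$: by Corollary~\ref{coro-rr}, if $p_1$ and $p_i$ both divided the reduced exponent $k$ of a minimal v-sum appearing in some $E_x$, then $\|E_x\|\geq 2+(p_1-2)+(p_i-2)=p_1+p_i-2>2^n-2$, which combined with evenness/size constraints contradicts $\|E_x\|=2^n$ — more precisely $\|E_x\|\le \|D_fD_f^{(-1)}\|$ evaluated, but cleanly one just needs $p_1+p_i-2 > 2^n - 2$, i.e. $p_1+p_i>2^n$, to exceed $\|E_x\|=2^n$ once one also accounts for the contribution of the $p_1$-part; I would phrase this carefully. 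Since $p_1\mid k_x$ is forced (there must be some prime factor, and any prime factor other than those already removed is small), one concludes $p_i\nmid k_x$ for all $x\neq 1_G$ and all $i\geq r+1$.

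Then, applying Proposition~\ref{prop-ggbf} with $I\supseteq\{r+1,\dots,s\}$, we obtain an $(\overline m,n)$-GBF where $\overline m$ divides $\prod_{i=1}^r p_i^{\alpha_i}$; in fact $\overline m=\prod_{i\notin I}p_i^{\alpha_i}$ which is a divisor of $\prod_{i=1}^r p_i^{\alpha_i}$. Since the nonexistence of $(\prod_{i=1}^r p_i^{\alpha_i},n)$-GBFs is assumed, and since existence of an $(\overline m,n)$-GBF with $\overline m\mid \prod_{i=1}^r p_i^{\alpha_i}$ would contradict this via repeated application of Proposition~\ref{prop-ggbf} (stripping further prime powers only makes $\overline m$ smaller), we reach a contradiction. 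I would close by remarking that ``$p_{r+1}$ smallest prime with $p_1+p_{r+1}>2^n$'' should be read so that $p_1,\dots,p_r$ are exactly the prime factors of $m$ below that threshold.

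The main obstacle is the bookkeeping in part (c): making precise why $p_1\mid k_x$ must hold for \emph{some} relevant $x$ (this is where Lemma~\ref{lem-ab}, applied to the pair $p_1,p_i$, enters — it guarantees an element of order divisible by $p_1p_i$ in some $supp(E_y)$, forcing $p_1p_i\mid k_y$ and hence the size bound), and confirming that the index set $I$ in Proposition~\ref{prop-ggbf} indeed contains all of $\{r+1,\dots,s\}$ rather than just establishing one prime at a time. I expect the cleanest route is: fix $i\geq r+1$; if $p_i\mid k_x$ for some $x\neq 1_G$, then since also some prime $p_j$ with $j\leq r$ must divide $k_x$ (as $k_x$ is squarefree and $>1$, and $E_x$ is a nontrivial v-sum) — actually by Lemma~\ref{lem-ab} applied to $p_1,p_i$ one gets $p_1\mid k_y$ together with $p_i\mid k_y$ for some $y$ — Corollary~\ref{coro-rr} gives $2^n=\|E_y\|\geq 2+(p_1-2)+(p_i-2)=p_1+p_i-2$. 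This forces $p_1+p_i\leq 2^n+2$; but we need the strict contradiction. To get it I would instead invoke that $E_y$ decomposes into minimal v-sums and at least one minimal v-sum has reduced exponent divisible by $p_1p_i$ (three prime factors, by Remark~\ref{rmk-k}), so by Proposition~\ref{prop-nmini}(b) its $\|\cdot\|\geq (p_1-1)(p_i-1)+(\text{third prime}-1)$, which for any third prime $\geq 3$ already exceeds $2^n$ whenever $p_1+p_i>2^n$ with $p_1\geq 3$. That inequality $(p_1-1)(p_i-1)+2\geq 3p_i-1 > 2^n$ when $p_1\ge 3$ and $p_i > 2^n-p_1$ is the technical crux, and I would verify it carefully before concluding $p_i\nmid k_x$ for all $x$, then finish with Proposition~\ref{prop-ggbf} as above.
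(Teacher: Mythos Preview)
Your treatment of parts (a) and (b) is fine and matches the paper. Part (c), however, has a genuine gap.

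The detour through Lemma~\ref{lem-ab} does not work. That lemma only produces an element $h\in supp(E_y)$ whose \emph{order} is divisible by $p_1p_i$; it says nothing about the c-exponent $k_y$. An element of large order can sit in $supp(E_y)$ simply because of the translates $h_j$ in the decomposition $D_j=D_j'h_j$ of Lemma~\ref{lem-dmini}, without $p_1$ or $p_i$ dividing any $k_j$. So you cannot conclude $p_1p_i\mid k_y$ from Lemma~\ref{lem-ab}. Worse, even if you did know $p_1p_i\mid k_y$, this is only the $\lcm$ of the reduced exponents of the minimal pieces; it does not force $p_1$ and $p_i$ to occur together in the reduced exponent of a \emph{single} minimal v-sum, so your appeal to Remark~\ref{rmk-k} and Proposition~\ref{prop-nmini}(b) for a three-prime piece is unjustified. (Your inequality $(p_1-1)(p_i-1)+2\ge 3p_i-1$ also fails when $p_1=3$.)

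The paper's argument avoids all of this. Fix $t\ge r+1$ and suppose $p_t\mid k_x$. Then $p_t\mid k_j$ for some minimal summand $D_j$ in $E_x=\sum D_j$. Now split on $k_j$ itself: if $k_j=p_t$, then $\|D_j\|=p_t$ but $E_x\neq D_j$ (since $p_t\nmid 2^n$), so a second summand contributes at least $p_1$, giving $\|E_x\|\ge p_t+p_1>2^n$; if $k_j\neq p_t$, then by Remark~\ref{rmk-k} $k_j$ has at least three prime factors, and Corollary~\ref{coro-rr} already gives $\|D_j\|\ge p_t+p_1>2^n$. Either way one reaches a contradiction, so $p_t\nmid k_x$ for every $x\neq 1_G$, and Proposition~\ref{prop-ggbf} finishes. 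No use of Lemma~\ref{lem-ab}, and no need to pair $p_1$ with $p_t$ inside a single minimal piece.
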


\begin{proof}
  (a) and (b) follow directly from Theorem~\ref{thm-main1}. As for (c), it suffices to show that if $t\geq r+1$, then $p_t$ does not divide the c-exponent of $E_x$ for any $x\neq 1_G$. We follow the notation used in the proof of Theorem~\ref{thm-main1}. We write $E_x=\sum D_j$ such that all $D_j$'s are minimal v-sums. Again, we denote by $k_j$ the reduced exponent of $D_j$. Suppose that $p_t|k_1$. If $k_1 = p_t$, then $E_x\neq D_1$ as otherwise $p_t|2^n$. Therefore, $||E_x||\geq ||D_1||+||D_2||\geq p_t+p_1>2^n$. On the other hand, if $k_1\neq p_t$, then as shown before, $k_1$ is a product of at least three primes. Hence, $||D_1||\geq p_t+p_1>2^n$, which is impossible.
\end{proof}

\begin{remark}\label{rmk-main1}
For $s = 2$, our result is stronger than ~\cite[Corollary 2]{LFF17}. 
\end{remark}

Now we consider the case when $m = 2m'$ with $m'$ odd. If $f$ is a $(2m', n)$ GBF, then we define 
$$
G_f := \{ x \in G : f(x) \textrm{ odd} \}.
$$
Note that a $(2m', n)$ GBF is trivially an $(m',n)$ GBF if $G_f = \emptyset$ or $G$. Multiply $f$ by $-1$ if necessary, we may always assume $|G_f|\leq |G|/2$.  Note that $G_f^{(-1)}=G_f$ as $G$ is $2$-elementary. Apply a homomorphism $\psi: \Z[G\cdot C_{m}]$ such that $\psi$ fixes every element in $G$ and maps the generator $g$ of $C_m$ to $-1$, then we have 
\begin{eqnarray*} 
  \lefteqn{\psi(D_f)\psi(D_f^{(-1)})} \\
  & = &  (G-2G_f)(G-2G_f^{(-1)}) \\
  & = & (|G|-4|G_f|)G+4G_f^2 \\
  & = & 2^n+\sum_{1_G \neq x\in G} \psi(E_x)x.
\end{eqnarray*}
 Write  
 \begin{equation}\label{eqn-bx}
 G_f^2 = |G_f|+ 2 \sum_{ 1_G \neq x\in G} b_x x. 
 \end{equation}
We denote $\psi(E_x)$ by $a_x$.  It then follows that for $x \ne 1_G$,  
\begin{equation}\label{eqn-ax}
a_x=|G|-4|G_f|+8b_x .
\end{equation}

The following is a consequence of~\cite[Theorem 1]{Mann65}.

\begin{lemma}\label{lem-mann}
If $n$ is odd, then $G_f$ is a difference set in $G$ if and only if $G_f=\{ 1_G \}$.
\end{lemma}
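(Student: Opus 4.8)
The plan is to invoke the classical theorem of Mann on multipliers of difference sets. Recall that if $G_f$ is a $(v,k,\lambda)$-difference set in the $2$-elementary group $G$ of order $v = 2^n$, then the associated group ring equation is $G_f G_f^{(-1)} = (k-\lambda) \cdot 1_G + \lambda G$; since $G$ is $2$-elementary we have $G_f^{(-1)} = G_f$, so $G_f^2 = (k-\lambda) + \lambda G$. The parameters satisfy $k(k-1) = \lambda(v-1)$, hence $\lambda = k(k-1)/(2^n - 1)$. The key arithmetic observation is that $-1$ (equivalently, the integer $2^n - 1$, which is coprime to $|G| = 2^n$) acts as a multiplier here in the trivial sense that $G_f^{(-1)} = G_f$, and more importantly every prime divisor $p$ of $k - \lambda$ is self-conjugate modulo $\exp(G) = 2$ in a vacuous way; the real input is that $2$ is a multiplier of any difference set in an abelian $2$-group. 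By \cite[Theorem 1]{Mann65}, the multiplier $2$ fixes a translate of $G_f$, and a standard argument (a difference set fixed setwise by a multiplier $t$ with $\gcd(t,v)=1$ is a union of orbits of $x \mapsto x^t$) forces strong divisibility constraints.

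First I would reduce to the case where $G_f$ itself (not merely a translate) is fixed by the multiplier $2$, using that translates of difference sets are difference sets and that the statement "$G_f = \{1_G\}$" is translation-equivalent to "$G_f$ is a singleton." Next I would use the fact that the map $x \mapsto x^2$ on $G = C_2^n$ is the identity, so being fixed by the multiplier $2$ gives no information directly — instead the correct route is to use that $-1$, i.e. the numerical multiplier arising from $t \equiv 2 \pmod{?}$, is handled via the self-conjugacy / field-descent machinery. The cleanest path: since $G$ is $2$-elementary, $\mathbb{Z}[G]$ modulo $2$ is the group algebra $\mathbb{F}_2[G]$, which is local with maximal ideal the augmentation ideal; reducing $G_f^2 = (k-\lambda) + \lambda G$ modulo $2$ and using $n$ odd (so $|G| = 2^n$, $v - 1 = 2^n-1 \equiv 1 \pmod 3$ or the relevant parity of $\lambda$) pins down $k$. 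Concretely, $n$ odd means $2^n - 1$ is not of the form making $\lambda$ an integer for $1 < k < 2^n - 1$ except degenerately; one checks $k \equiv \lambda \pmod 2$ forces $k$ even, while $k(k-1) = \lambda(2^n-1)$ with $2^n-1$ odd forces $2 \mid \lambda$ hence $4 \mid k(k-1)$, and iterating the $2$-adic valuation (this is exactly where $n$ odd enters, through $v_2(k-\lambda)$ versus $v_2$ of the Hall multiplier bound) collapses $k$ to $1$.

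The step I expect to be the main obstacle is extracting the precise $2$-adic contradiction: one must show that for $n$ odd, no nontrivial $(2^n, k, \lambda)$-difference set can exist in $C_2^n$, and the honest way is to cite Mann's theorem to get that a translate of $G_f$ is a union of cosets of a nontrivial subgroup (since the multiplier has a fixed block structure), then observe that a difference set which is a union of cosets of a subgroup $H$ of order $2^j$ would require $H \subseteq G_f$ and the image $\bar G_f$ in $G/H$ to be a difference set with $\lambda$ scaled by $|H|$, and descend. Tracking that $|G_f| \le |G|/2$ together with the arithmetic $k(k-1) = \lambda(2^n-1)$ under repeated halving should leave only $k=1$. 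I would write the forward direction ($G_f = \{1_G\} \Rightarrow$ trivially a (trivial) difference set) as a one-line check, and devote the work to the converse via the above descent, being careful that the group ring identity $G_f^2 = (k-\lambda)+\lambda G$ is exactly equation~(\ref{eqn-bx}) specialized, so the $b_x$ are all equal to $\lambda/2$, which is itself already a strong constraint I can feed back into~(\ref{eqn-ax}).
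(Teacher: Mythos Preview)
The paper does not actually prove this lemma: it simply records it as ``a consequence of~\cite[Theorem 1]{Mann65}'' and moves on. So if your intent is merely to cite Mann, you are aligned with the paper. The problem is that your elaboration of \emph{how} Mann's theorem applies is incorrect. A multiplier of a $(v,k,\lambda)$-difference set must be coprime to $v$; since $v=2^n$ here, the integer $2$ cannot be a multiplier at all, so the sentence ``$2$ is a multiplier of any difference set in an abelian $2$-group'' is false, and the translate-fixing and orbit-union consequences you draw from it are vacuous. (Relatedly, $x\mapsto x^2$ on $C_2^n$ is the \emph{constant} map $x\mapsto 1_G$, not the identity map.) After abandoning the multiplier route you sketch a $2$-adic argument and a coset-descent argument, but neither is carried to a conclusion; as written there is no proof.

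If you want a self-contained argument rather than a bare citation, the clean route is purely character-theoretic and does not use multipliers. For every nontrivial character $\chi$ of $G=C_2^n$ one has $\chi(G_f)\in\Z$ and $\chi(G_f)^2=k-\lambda$, so $k-\lambda=s^2$ for some integer $s\ge 0$. If an odd prime $q$ divides $s$, then $q\mid\chi(G_f)$ for all nontrivial $\chi$; combining $k^2-s^2=\lambda\,2^n$ with $\lambda=k-s^2$ gives $q\mid k$ or $q\mid(2^n-k)$, and in either case Fourier inversion forces all coefficients of $G_f$ (or of its complement) to be divisible by $q$, hence zero --- so $G_f$ is trivial. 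Thus for a nontrivial $G_f$ we have $s=2^j$ with $j\ge 1$. Writing $k=2^jk'$ with $k'$ odd and substituting into $k(k-1)=\lambda(2^n-1)$ yields, after a short $2$-adic computation, $k=2^{n-1}\pm 2^j$ and then $2j=n-2$, which forces $n$ to be even. Hence for $n$ odd the only difference sets in $C_2^n$ are the trivial ones, and under the standing convention $1\le |G_f|\le |G|/2$ with $1_G\in G_f$ this means $G_f=\{1_G\}$.
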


We now give the following nonexistence results on $(2m', n)$ GBFs, which are weaker than those in Theorem~\ref{thm-main1}.

\begin{theorem}\label{thm-main2}
  Let $n$ be odd and $m = 2 p^\alpha $, where $\alpha$ is a positive integer. Suppose that an $(m,n)$-GBF exists. Then $p < 2^{n-3}$ unless $p = 2^{n-2} - 1$ is a Mersenne prime. In particular, if $n \leq 3$, there is no $(m,n)$-GBF if $m = p^\alpha$ or $m = 2p^\alpha$.  
\end{theorem}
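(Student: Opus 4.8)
\textbf{Proof proposal for Theorem~\ref{thm-main2}.}

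The plan is to study the integers $a_x = \psi(E_x)$ for $x \neq 1_G$, which by \eqref{eqn-ax} are of the form $a_x = |G| - 4|G_f| + 8b_x = 2^n - 4|G_f| + 8b_x$. The starting observation is that since $E_x$ is a v-sum in $\N[C_m]$ annihilated by any character of order $m = 2p^\alpha$, and since $4 \nmid m$, Lemma~\ref{lem-exp} forces the c-exponent $k_x$ of each $E_x$ to be a product of distinct primes dividing $m$; by Remark~\ref{rmk-k} it is either a single prime or has at least three prime factors. Since $m$ has only two prime factors $2$ and $p$, the c-exponent has at most two prime factors, so $k_x \in \{1, 2, p\}$ for every $x$. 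If $k_x = 1$ for all $x$, then every $E_x$ is supported on $\{g^0\}$ only, so $f$ is essentially constant and cannot generate $C_m$; hence there is some $y \neq 1_G$ with $k_y \in \{2, p\}$. In fact, since $C_m = \langle\{g^{f(x)}\}\rangle$, Lemma~\ref{lem-ab} applied to the primes $2$ and $p$ produces $y \neq 1_G$ and $h \in \mathrm{supp}(E_y)$ with $2p \mid \circ(h)$, which will be the source of the key divisibility constraints.

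Next I would pin down $|G_f|$ modulo small powers of $2$. Applying $\psi$ to Lemma~\ref{lem-ex}(a): the coefficient of $g^0$ in $E_x$ is even, and after $\psi$ this contributes to $a_x$; more usefully, reducing \eqref{eqn-ax} modulo $8$ gives $a_x \equiv 2^n - 4|G_f| \pmod 8$, and since $n \geq 3$ (we only care about $n$ odd, $n \geq 3$ here, the cases $n = 1$ being vacuous as $2^n = 2$ is too small) this is $\equiv -4|G_f| \pmod 8$. On the other hand, for those $x$ with $k_x = p$, Lemma~\ref{lem-dmini} and Lemma~\ref{lem-exp}(b) force $E_x = P h$ with $P$ the subgroup of order $p$ (after also using that the $g^0$-coefficient is even, $E_x$ would be a multiple of $P$, say $E_x = Pc$ with $c$ even if $p \mid$ something — more carefully, a minimal v-sum with reduced exponent $p$ is $Ph$, and summing such), so $\psi(E_x) = \psi(P)\psi(h) = 0$ since $\psi(P) = 1 + (-1) + \cdots$ over the image of an odd-order group — wait, $\psi$ sends $g \mapsto -1$, so it factors through $C_2$, and $\psi(P) = p \cdot 1 = p$ if $p$ is odd? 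No: $P = \{1, g^{m/p}, g^{2m/p}, \dots\} = \{g^{2p^{\alpha-1}j}\}$, all of which $\psi$ sends to $(-1)^{\text{even}} = 1$, so $\psi(P) = p$. Thus $a_x = p \cdot (\pm 1) \cdot(\text{stuff})$, giving $p \mid a_x$. Combining, for such $x$ we get $p \mid 2^n - 4|G_f| + 8b_x$, and for the $k_x = 2$ case we get $\psi$ of the order-$2$ subgroup $\{g^0, g^{m/2}\}$ mapping to $1 + (-1) = 0$, so $a_x = 0$ for those. So the nonzero $a_x$ occur only at $x$ with $k_x \in \{1, p\}$.

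From here the argument becomes a counting/divisibility squeeze: $G_f^2 = |G_f| + 2\sum b_x x$ in $\Z[G]$, so $|G_f|^2 = \chi_0(G_f^2) = |G_f| + 2\sum_{x \neq 1_G} b_x$, and also $\sum_{x \neq 1_G} a_x = \sum_{x \neq 1_G}(2^n - 4|G_f| + 8b_x) = (2^n-1)(2^n - 4|G_f|) + 4(|G_f|^2 - |G_f|)$. But $\sum_{x \in G} E_x = (\text{something explicit from } D_f D_f^{(-1)})$; indeed applying $\psi$ to $\eta(D_f)\psi(D_f^{(-1)})$ and comparing with $(|G| - 4|G_f|)G + 4G_f^2$, matching coefficients gives a clean formula. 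Since the $a_x$ with $k_x = p$ are divisible by $p$ and bounded (by $\|E_x\| \le 2^n$, and $a_x = \pm p^{j}$-type expressions), and those with $k_x = 1$ have $|a_x|$ small, one extracts that $|G_f| = 1$ must hold unless $p$ is very large relative to $2^n$. The case $|G_f| = 1$ is handled by Lemma~\ref{lem-mann}: $G_f = \{1_G\}$ makes $G_f$ trivially "a difference set" only in the degenerate sense, but then $a_x = 2^n - 4 + 8b_x$ with $b_x = 0$ for all $x \neq 1_G$ (since $G_f^2 = 1_G$), so $a_x = 2^n - 4$ for all $x \neq 1_G$; for this to be a legitimate v-sum value compatible with $k_x \le p$ and $\|E_x\| = 2^n$, one needs $2^n - 4 = \pm p$ essentially, i.e. $p = 2^n - 4$ (not prime unless...) or through the order-$2$/order-$p$ decomposition $p = 2^{n-2} - 1$ forcing a Mersenne prime, with the bound $p < 2^{n-3}$ emerging in the remaining (non-Mersenne) case from the inequality $\|E_x\| = 2^n$ and $E_x$ containing a $P$-block (contributing $\ge p$) plus an even $g^0$-coefficient plus enough terms. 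The final clause ($n \le 3$, $m = p^\alpha$ or $2p^\alpha$) then follows: for $m = p^\alpha$ odd this is Corollary~\ref{coro-main1}(a); for $m = 2p^\alpha$ with $n = 3$, we'd need $p < 2^0 = 1$ (impossible) or $p = 2^1 - 1 = 1$ (not prime), so nonexistence; $n = 1$ is vacuous.

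The main obstacle I anticipate is handling the order-$2$ contributions cleanly: when $k_x = 2$ the v-sum $E_x$ decomposes using $C_2 \le C_m$, but the interaction between the "$2$-part" of $E_x$ (which $\psi$ kills) and the structure of $G_f$ via \eqref{eqn-bx} requires care — specifically, translating the group-ring identity $(|G| - 4|G_f|)G + 4G_f^2 = 2^n + \sum_x \psi(E_x)x$ into enough scalar equations to corner $|G_f|$, while correctly tracking that $\psi(E_x) = 0$ whenever the minimal-v-sum decomposition of $E_x$ consists purely of order-$2$ blocks. The Mersenne borderline case $p = 2^{n-2} - 1$ is exactly where $a_x = 2^n - 4 = 4p$ (when $|G_f| = 1$), so $E_x$ could plausibly be $4$ copies of the order-$p$ subgroup $P$ — this is the tight configuration that cannot be excluded and must be stated as the exception, so getting the inequality direction right there is delicate.
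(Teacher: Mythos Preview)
Your setup is right: since $m=2p^\alpha$ has only two prime divisors, each minimal v-sum in $E_x$ has reduced exponent $2$ or $p$ (Remark~\ref{rmk-k}), so $E_x=P_2Y_x+PZ_x$ and hence $a_x=\psi(E_x)=p\,\psi(Z_x)$, i.e.\ $p\mid a_x$ for \emph{every} $x\neq 1_G$. From \eqref{eqn-ax} you then get $8p\mid a_x$ when $|G_f|$ is even (so $p\le 2^{n-3}$ immediately), and the case $|G_f|=1$ works as you sketch: $b_x=0$, $a_x=2^n-4$, $4p\mid 2^n-4$, and $p\mid 2^{n-2}-1$ forces $p<2^{n-3}$ or $p=2^{n-2}-1$.

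The genuine gap is the case $|G_f|$ odd with $|G_f|\ge 3$. Your proposed ``counting/divisibility squeeze'' via $\sum_{x\neq 1_G}a_x$ does not pin anything down: that sum evaluates to $(2^n-2|G_f|)^2-2^n$, which is divisible by $p$ automatically once each $a_x$ is, so no new constraint emerges. You do not actually reduce to $|G_f|=1$; the paper handles $|G_f|\ge 3$ directly with a structural argument you are missing. From $p\mid a_x$ for all $x$ and $a_x-a_{x'}=8(b_x-b_{x'})$ one gets $p\mid b_x-b_{x'}$ for every pair. By Lemma~\ref{lem-mann}, $G_f$ is not a difference set, so choose $x,x'$ with $b_x>b_{x'}$; then $b_x-b_{x'}\ge p$. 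Now split $G=\langle x\rangle\cdot G'$ and write $G_f=(Y\cup Z_1)\cup(Y\cup Z_2)x$ with $Y\subset G'$, $|Y|=b_x$, $Z_1\cap Z_2=\emptyset$. The key identity is that in $\Z[G']$,
\[
(Z_1-Z_2)^2=(|Z_1|+|Z_2|)\cdot 1_{G'}+\sum_{1_{G'}\neq v\in G'}2(b_v-b_{vx})\,v,
\]
so every nontrivial coefficient of $(Z_1-Z_2)^2$ is a multiple of $2p$ yet bounded in absolute value by $|Z_1|+|Z_2|=|G_f|-2b_x$. Either some such coefficient is nonzero, giving $2p\le |G_f|-2b_x\le |G_f|-2p$, hence $4p\le|G_f|\le 2^{n-1}$; or $(Z_1-Z_2)^2=|Z_1|+|Z_2|$, which forces $|Z_1|+|Z_2|$ to be a perfect square with all character values $\pm\sqrt{|Z_1|+|Z_2|}$, and a short Fourier-inversion parity check then reduces back to $|G_f|$ even or to $a_x=2^n-4$. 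This $(Z_1-Z_2)^2$ step is the idea your outline lacks, and without it the odd $|G_f|\ge 3$ case is not covered.

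A minor correction: your tight-configuration picture ``$E_x$ could be $4$ copies of $P$'' is impossible, since $\|E_x\|=2^n$ would force $4p=2^n$. The Mersenne borderline is purely arithmetic ($4p\mid 2^n-4$), not a geometric configuration of $E_x$.
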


\begin{proof}
  Let $P_2$ be the subgroup of order $2$ and $P$ be a subgroup of order $p$. For any $x \ne 1_G$, we conclude from Lemma~\ref{lem-exp} (b) that $E_x = P_2 Y_x + P Z_x$ for some $Y_x, Z_x \in \N[C_m]$. Note that $\psi(E_x) \ne 0$ for some $x \ne 1_G$. Otherwise, the c-exponent of all $E_x$ is $2$ and by Proposition~\ref{prop-ggbf}, there exists a $(2,3)$-GBF, which is impossible. Hence, $a_x = \psi(E_x) \ne 0$ for some $x \ne 1_G$. Therefore, we have $\psi(P) | \psi(E_x)$, i.e., $p | a_x$. Note that in view of Equation (\ref{eqn-ax}), $4p | a_x$ if $|G_f|$ is odd and $8p |a_x$ if $|G_f|$ is even. We are done if $8p | a_x$ as $|a_x| < 2^n$. We may therefore assume that $|G_f|$ is odd.  
 
Suppose that $G_f = \{ 1_G \}$. Then, $a_x = 2^n-4$ if $x \ne 1_G$. Hence, $4p | a_x$. It follows that $p_1 < 2^{n-3}$ unless $4p=2^n-4$ which implies that $p = 2^{n-2}-1$ is a Mersenne prime. 


Suppose that $G_f \neq \{1_G\}$. As $G_f$ is not a difference set, there exist two elements $x\neq 1_G$ and $x'\neq 1_G$ such that $b_x> b_{x'}\geq 0$. Since $p | a_x$ and $p | a_{x'}$, it follows that $p | (b_x-b_{x'})$ and $b_x - b_{x'} = tp$ for some positive integer $t$. To get our desired result, we need to find a bound on $b_x-b_{x'}$. Note that in view of Equation (\ref{eqn-bx}), $b_x \leq |G_f|/2 \leq |G|/4$. Hence, we get our desired result if $t \geq 2$. Thus, we
may assume that $t=1$, i.e., $b_x = p + b_{x'}$.

Suppose that $G = \langle x\rangle \cdot G'$, where $G'$ is a subgroup of order $2^{n-1}$ in $G$. As the coefficient of $x$ in $G_f^2$ is $2b_x$, there are $2b_x = 2p + 2b_{x'}$ pairs $(u,v)$ of elements in $G_f\times G_f$ such that $uv=x$. Therefore, there exists a set $Y \subseteq G'\cap G_f$ such that $Y\cup (Yx)\subseteq G_f$ with $|Y| = p + b_{x'}$. Write $G_f = (Y \cup Z_1) \cup (Yx \cup Z_2x)$ such that 
\[ 
Z_1\subseteq G', Z_2\subseteq G', Y\cap Z_1=\emptyset \mbox{ and } Y \cap Z_2=\emptyset.
\]
Since $b_x = |Y|$, it follows that $Z_1\cap Z_2=\emptyset$.  Moreover, we have
\[ 
G_f^2 = [2Y^2+2Y(Z_1+Z_2)+Z_1^2+Z_2^2]+[2Y^2+2Y(Z_1+Z_2)+2Z_1Z_2]x.
\]
Note that the support of $[2Y^2+2Y(Z_1+Z_2)+Z_1^2+Z_2^2]$ is in $G'$ and 
the support of $[2Y^2+2Y(Z_1+Z_2)+2Z_1Z_2]x$ is in $G'x$. 
We now consider the coefficients of the following group elements
\[Z=[2Y^2+2Y(Z_1+Z_2)+Z_1^2+Z_2^2]-[2Y^2+2Y(Z_1+Z_2)+2Z_1Z_2]=(Z_1-Z_2)^2.\]
For any $1_{G}\neq v\in G'$, the coefficient of $v$ in $Z$ is equal to $2(b_v-b_{vx})$. Clearly, the absolute value of the coefficient of $v$ in $Z$ is less than $|Z_1|+|Z_2|$ as $Z_1$ and $Z_2$ are disjoint. Thus, if there exists $v \neq 1_G$ in $G'$ such that $b_v-b_{vx}$  is nonzero, then $p | (b_v - b_{vx})$ and we obtain
\[ 
2p \leq 2|b_v-b_{vx}| \leq |Z_1|+|Z_2| \leq (|G_f|-2b_x) \leq |G_f| - 2p.
\]
Hence, we get $4p \leq |G_f| \leq |G|/2$ and $p \leq 2^{n-3}$. Thus, it remains to deal with the case $(Z_1-Z_2)^2=|Z_1|+|Z_2|$. 

If both $Z_1 = Z_2 = \emptyset$, then $2b_x = |G_f|$. Hence, $|G_f|$ is even and as remarked earlier, we are done in this case. Note that as $G = C_2^n$, all character values of $Z_1 - Z_2$ are integers. Thus, $|Z_1|+|Z_2|$ is a square. Since $Z_1 \cap Z_2 = \emptyset$, all nonzero coefficients of $Z_1-Z_2$ is $\pm 1$. On the other hand, if $q$ is an odd prime divisor or $|Z_1| + |Z_2|$, then $q$ divides the all nonzero coefficients of $Z_1 - Z_2$ by applying
Fourier inversion formula. This is impossible. It follows that $|Z_1|+|Z_2| = 2^t$. Again, we are done if $t \geq 1$ as then $|G_f|=2b_x+|Z_1|+|Z_2|$ is even.  Hence, we may assume that $t=0$, i.e., $|Z_1|+|Z_2|=1$. Note that the coefficient of $1_G$ in $[2Y^2+2Y(Z_1+Z_2)+Z_1^2+Z_2^2]$ is $|G_f|$ and the coefficient of $1_G$ in $[2Y^2+2Y(Z_1+Z_2)+2Z_1Z_2]$ is the same as the coefficient of $x$ in $G_f^2$. As $Z=1$, it follows that $2b_x=|G_f|-1$. Hence, $a_x=|G|-4|G_f|+4(|G_f|-1)=2^n-4$. Recall that $4p | a_x$. Hence either $p=2^{n-2}-1$ or $p < 2^{n-3}$.

The proof is then completed. 
\end{proof}

\begin{corollary}\label{coro-main2}
  Let $n$ be odd and $m = 2 \prod_{i =1}^s p_i^{\alpha_i}$, where $p_1 < p_2 < \cdots < p_s$ are odd primes and $\alpha_i$'s are all positive integers.  
  \begin{itemize}
    \item[(a)] If $s = 1$, then there is no $(m,n)$-GBF if one of the following conditions is satisfied:
      \begin{itemize}
	\item[(i)] $p_1 > 2^{n-2}$;
	\item[(ii)] $p_1$ is not a Mersenne prime and $p_1 > 2^{n-3}$;
	\item[(iii)] $p_1 \equiv 3, \ 5 \pmod{8}$.  
  \end{itemize} 
\item[(b)] If $s \ge 2$, and $r$ is the least integer such that $p_{r+1} + p_1 > 2^n + 2$, then there is no $(m,n)$-GBF if there is no $(2\prod_{i=1}^r p_i^{\alpha_i}, n)$-GBF. In particular, there is no $(m,n)$-GBF if $p_1 > 2^{n-2}$ and $p_1 + p_2 > 2^n +2$. 
  \end{itemize}
\end{corollary}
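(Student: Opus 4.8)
The plan is to deduce every part of Corollary~\ref{coro-main2} from results already established --- chiefly Theorem~\ref{thm-main2}, Corollary~\ref{coro-main1}, Proposition~\ref{prop-ggbf}, Lemma~\ref{lem-exp} and Corollary~\ref{coro-rr} --- together with one short quadratic-residue computation for part~(a)(iii).

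\textbf{Part (a).} Here $m=2p^{\alpha}$ with $p=p_1$. Items (i) and (ii) follow at once from Theorem~\ref{thm-main2}, which asserts that the existence of a $(2p^{\alpha},n)$-GBF forces $p<2^{n-3}$ unless $p=2^{n-2}-1$ is a Mersenne prime (the range $n\le3$ being covered by the last sentence of that theorem). Under (i) the inequality $p>2^{n-2}$ excludes both alternatives; under (ii), $p$ not being a Mersenne prime gives $p\neq2^{n-2}-1$, so $p<2^{n-3}$, contradicting $p>2^{n-3}$. For (iii) I argue directly. Suppose a $(2p^{\alpha},n)$-GBF $f$ exists, and let $P_2$, $P$ be the subgroups of $C_m$ of orders $2$ and $p$. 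By Lemma~\ref{lem-exp}(b) the v-sum $E_x$ lies in $P_2\,\Z[C_m]+P\,\Z[C_m]$ for every $x\neq1_G$; since $\psi(P_2)=0$ and $\psi(P)=p$, this yields $p\mid a_x$ for all $x\neq1_G$. By~(\ref{eqn-ax}) we then get $8b_x\equiv 4|G_f|-2^n\pmod p$ for every $x\neq1_G$, so the $b_x$ are all congruent to one value $\beta$ modulo $p$, with $2\beta\equiv|G_f|-2^{n-2}\pmod p$. Applying a nonprincipal character $\chi$ of $G=C_2^n$ to~(\ref{eqn-bx}) and using $\sum_{x\neq1_G}\chi(x)=-1$ (Fact~\ref{fact1}) gives $\chi(G_f)^2\equiv|G_f|-2\beta\equiv 2^{n-2}\pmod p$. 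Since $\chi(G_f)\in\Z$, $2^{n-2}$ is a nonzero quadratic residue modulo $p$; as $n$ is odd, $2^{n-2}=2\,(2^{(n-3)/2})^2$, so $2$ is a quadratic residue modulo $p$, i.e.\ $p\equiv\pm1\pmod 8$, contradicting $p\equiv3,5\pmod 8$.

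\textbf{Part (b).} I mimic the proof of Corollary~\ref{coro-main1}(c). By Proposition~\ref{prop-ggbf} it suffices to show that for each $t\ge r+1$ the prime $p_t$ divides the c-exponent of no $E_x$ with $x\neq1_G$, since then $p_{r+1},\dots,p_s$ can all be removed, yielding an $(\overline{m},n)$-GBF with $\overline{m}\mid 2\prod_{i=1}^{r}p_i^{\alpha_i}$, hence a $(2\prod_{i=1}^{r}p_i^{\alpha_i},n)$-GBF, against the hypothesis. So suppose $p_t\mid k_x$ for some $x\neq1_G$; write $E_x=\sum_j D_j$ as a sum of minimal v-sums (Lemma~\ref{lem-smini}) with reduced exponents $k_j$, fix $j_0$ with $p_t\mid k_{j_0}$, and recall $||E_x||=2^n$ (since $E_x\in\N[C_m]$ and $|E_x|=2^n$). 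By Remark~\ref{rmk-k}, $k_{j_0}$ is either the prime $p_t$ or a product of at least three primes dividing $m$. If the latter, $k_{j_0}$ has besides $p_t$ a further odd prime factor $\ge p_1$ (at most one of its factors being $2$), so Corollary~\ref{coro-rr} gives $||D_{j_0}||\ge 2+(p_t-2)+(p_1-2)=p_t+p_1-2$. If the former, then $||D_{j_0}||=p_t$ is odd while $||E_x||=2^n$ is even, so $E_x$ has another minimal summand $D_l$ of odd weight; such a $D_l$ cannot have reduced exponent $2$, so $k_l$ has an odd prime factor $q\ge p_1$ and Corollary~\ref{coro-rr} gives $||D_l||\ge q\ge p_1$, whence $||E_x||\ge||D_{j_0}||+||D_l||\ge p_t+p_1$. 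Either way $||E_x||\ge p_t+p_1-2$, so $2^n\ge p_t+p_1-2\ge p_{r+1}+p_1-2>2^n$, a contradiction. This proves the claim. For the final assertion, $p_1+p_2>2^n+2$ forces $r\le1$: if $r=0$ there is no $(2,n)$-GBF since $n$ is odd, and if $r=1$ there is no $(2p_1^{\alpha_1},n)$-GBF by part~(a)(i); in either case no $(m,n)$-GBF exists.

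The bookkeeping in (a)(i)--(ii), the passage from an $(\overline{m},n)$-GBF with $\overline{m}\mid M$ to an $(M,n)$-GBF, and the identity $||E_x||=2^n$ are all straightforward. The step requiring the most care is the weight estimate in part~(b): one must check that the hypothesis $p_{r+1}+p_1>2^n+2$ --- with the precise constant $+2$ --- is exactly what the argument needs, and in particular that in the sub-case $k_{j_0}=p_t$ the parity of $2^n$ genuinely forces an \emph{extra} minimal summand of weight $\ge p_1$ rather than of weight merely $\ge2$. A minor point in (a)(iii) is that one must use $p\mid a_x$ for \emph{every} $x\neq1_G$, so that the $b_x$ become congruent modulo $p$, rather than for a single $x$ as in Theorem~\ref{thm-main2}.
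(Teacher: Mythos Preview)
Your proof is correct. Parts (a)(i)--(ii) and the overall strategy for (b) (use Proposition~\ref{prop-ggbf} after showing that no $p_t$ with $t\ge r+1$ divides any c-exponent) coincide with the paper's proof. Two points differ in execution.

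For (a)(iii) the paper simply says ``it is known that no $(2p_1^{\alpha_1},n)$-GBF exists'' and cites nothing further; your quadratic-residue argument --- deducing $p\mid a_x$ for \emph{every} $x\neq 1_G$, hence a common residue for the $b_x$, and then reading $\chi(G_f)^2\equiv 2^{n-2}\pmod p$ off~(\ref{eqn-bx}) --- is a clean self-contained proof of this classical fact and a genuine addition over what the paper writes.

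For (b), in the sub-case where the minimal summand $D_{j_0}$ has reduced exponent exactly $p_t$, the paper argues by looking at the full c-exponent $k_x$ of $E_x$: if $k_x=p_i$ it invokes ``the argument in (a)'' to get $4p_i\le 2^n$, and if some further $p_j\mid k_x$ it appeals to Proposition~\ref{prop-nmini}. Your route is different and tidier: since $\|D_{j_0}\|=p_t$ is odd while $\|E_x\|=2^n$ is even, parity forces another minimal summand $D_l$ of odd weight, hence $k_l\neq 2$, hence $k_l$ has an odd prime factor $q\ge p_1$ and $\|D_l\|\ge q\ge p_1$. This gives the same bound $\|E_x\|\ge p_t+p_1-2$ without detouring through $a_x$ or the structure of $k_x$, and avoids the slightly opaque cross-reference in the paper's version.
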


\begin{proof}
  It is easily seen that (i) and (ii) of (a) directly follow from Theorem~\ref{thm-main2}. If (iii) holds, it is known that no $(2 p_1^{\alpha_1}, n)$-GBF exists.  
  
  To prove (b), it is sufficient to show that for $i\geq r + 1$,  $p_i$ does not divide the c-exponent of any $E_x$ for $x \ne 1_G$.  As before, we wirte $E_x = \sum D_j$ and $k_j$ the reduced exponent of $D_j$. We may assume that $p_i$ divides $k_1$. If $k_1$ consists of at least three prime factors, then $|| D_i || \ge 2 + (p_1 -2) + (p_i - 2)$. Thus, $2^n \ge p_1 + p_i - 2 \ge p_1 + p_{r+1} - 2 > 2^n$. This is impossible. Therefore, we have $k_1 = p_i$.  

Otherwise, we assume that $p_i$ divides the reduced exponent $k_x$ of $\tau(E_x)$. If $k_x=p_i$, it follows from the argument in (a) that $4p_i \leq 2^n$. This is impossible as $2^n < p_1 + p_i < 4p_i$. Therefore, $p_j|k_x$ for some $j\neq i$. But then by Proposition~\ref{prop-nmini}, $2^n \geq p_j+p_i-2 > p_{r+1}+p_1-2$. This is impossible. 
\end{proof}

\begin{remark}\label{rmk-main2}
  When compared with~\cite[Theorem 2]{LFF17}, our result in Corollary~\ref{coro-main2} is stronger in all cases quoted in Table 2~\cite{LFF17} therein except for the case that $p = 191$. 
\end{remark}

\section{Nonexistence results for $n=3$}\label{sec-n3}

In this section, we show that there in no $(m, 3)$-GBF for all $m$ odd or $m\equiv 2 \pmod{4}$.  By Proposition~\ref{prop-ggbf}, we may assume that all prime factors of $m$ are less than or equal to $7$. According to Corollary~\ref{coro-main1}, we conclude that there is no $(m,3)$-GBF if $m$ is odd. Therefore, we may write $m=2\cdot 3^a 5^b 7^c$. For convenience, we fix the following notation. Let $g_2,g_3,g_5,g_7$ be elements of order $2,3,5,7$, respectively. Let $P_2, P_3,P_5,P_7$ be subgroups of order $2,3,5$ and $7$, respectively. 

We assume that $f$ is an $(m,3)$-GBF. We first determine what $E_x$ is if $x\neq 1_G$. As seen before, $\tau(E_x)=0$ for any character of order $m$. Recall that $\cP(k)$ denotes the set of all prime factors of the integer $k$.  

\begin{lemma}\label{lem-pki}
For any $x\neq 1_G$, write $E_x=\sum D_i$ where each $D_i$ is a minimal v-sum with reduced exponent 
$k_i$. Then ${\cP}(k_i) = \{2\}, \{3\}, \{5\}, \{7\}$ or  $\{2,3,5\}$ or $\{2,3,7\}$. Moreover, 
\begin{itemize}
  \item [(a)] If ${\cP}(k_i)=\{j\}$ for some $j\in \{2,3,5,7\}$, then $D_i = P_j h_j$ for some $h_j \in C_{30}$.   
\item [(b)] If ${\cP}(k_i)=\{2,3,7\}$, then $E_x = g_2^{\alpha} (P_7^* + g_2 P_3^*)$ for some integer $\alpha$. 
\end{itemize}
\end{lemma}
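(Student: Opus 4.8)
The plan is to pin down the structure of $E_x$ for $x\neq 1_G$ by combining the weight count $\|E_x\|=2^3=8$ with the structural results on minimal v-sums. First I would note that, since $E_x=\sum_{y\in G}g^{f(y+x)}g^{-f(y)}$ is a sum of $|G|=8$ group elements, $E_x\in\N[C_m]$ with $\|E_x\|=8$, and that $\tau(E_x)=0$ for every character $\tau$ of order $m$ by Lemma~\ref{lem-ex}(b), so $E_x$ is a v-sum. Writing $E_x=\sum_i D_i$ with each $D_i$ a minimal v-sum (Lemma~\ref{lem-smini}) and letting $k_i$ be the reduced exponent of $D_i$, every prime in $\cP(k_i)$ divides $m$, hence lies in $\{2,3,5,7\}$, and by Remark~\ref{rmk-k} either $k_i$ is a prime or $|\cP(k_i)|\ge 3$.

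The enumeration step is the core. If $|\cP(k_i)|\ge 3$, then Proposition~\ref{prop-nmini}(b) gives $\|D_i\|\ge (p_1-1)(p_2-1)+(p_3-1)$ for the three smallest primes $p_1<p_2<p_3$ of $\cP(k_i)$; this equals $6$ for $\{2,3,5\}$, $8$ for $\{2,3,7\}$, $10$ for $\{2,5,7\}$ and $14$ for $\{3,5,7\}$, while Corollary~\ref{coro-rr} gives $\|D_i\|\ge 2+0+1+3+5=11$ when $\cP(k_i)=\{2,3,5,7\}$. Since $\|D_i\|\le\|E_x\|=8$, only $\{2,3,5\}$ and $\{2,3,7\}$ survive among the multi-prime cases. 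If $k_i=p$ is a prime, Proposition~\ref{prop-nmini}(a) gives $D_i=P_p h$ with $\|D_i\|=p$, so $p\in\{2,3,5,7\}$. This produces the stated list $\cP(k_i)\in\{\{2\},\{3\},\{5\},\{7\},\{2,3,5\},\{2,3,7\}\}$; the value $\{7\}$ is in fact vacuous, since a weight-$7$ component can neither equal $E_x$ (as $7\neq 8$) nor be combined with any further v-sum (as $7+2>8$).

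For part (b), a $\{2,3,7\}$-component has $\|D_i\|\ge 8=\|E_x\|$, so $E_x=D_i$ and the bound is attained; the equality clause of Proposition~\ref{prop-nmini}(b) then gives $E_x=(P_2^*P_3^*+P_7^*)h=(g_2P_3^*+P_7^*)h$ for some $h\in C_m$, using $P_2^*=\{g_2\}$. The two blocks $g_2P_3^* h$ and $P_7^* h$ are disjoint of sizes $2$ and $6$, so $E_x=E_x^{(-1)}$ (Lemma~\ref{lem-ex}(a)) forces each block to be inverse-closed; from $g_2P_3^* h=g_2P_3^* h^{-1}$ one gets $h=h^{-1}$, i.e.\ $h\in\langle g_2\rangle$, whence $E_x=g_2^\alpha(P_7^*+g_2P_3^*)$. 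For part (a), Proposition~\ref{prop-nmini}(a) already gives $D_i=P_j h$; moreover, if a prime-order component is present then part (b) shows there is no $\{2,3,7\}$-component (its weight would already fill $E_x$), so every $k_i$ divides $30$. It then remains to show that the translate $h$ may be chosen in $C_{30}$, i.e.\ that $\circ(h)$ is divisible by none of $7,9,25$. I would do this by going through the handful of partitions of $8$ into admissible component weights --- a weight-$6$ block $(g_2P_3^*+P_5^*)h'$ together with one $P_2$-coset, or $5+3$, or $3+3+2$, or $2+2+2+2$ --- and, in each case, imposing $E_x=E_x^{(-1)}$, the evenness of the coefficient of $g^0$ (Lemma~\ref{lem-ex}(a)), and the normalizations of Lemma~\ref{lem-q1q2q3}; together these should eliminate every configuration in which $supp(E_x)$ meets the $7$-, $9$- or $25$-torsion, leaving $\circ(h)\mid 30$.

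I expect the last step of part (a) to be the main obstacle. In (b) the rigid $2$-versus-$6$ block structure pins $h$ down immediately, but for the prime-order decompositions the v-sum hypothesis carries no information along a $P_2$-factor --- since $\tau(P_2)=0$ for every character $\tau$ of order $m$ --- so the exclusion of higher torsion in $supp(E_x)$ must be extracted from the self-inversion, the parity of the $g^0$-coefficient and Lemma~\ref{lem-q1q2q3} alone (and, if these are not enough, from revisiting the defining expression $E_x=\sum_y g^{f(y+x)-f(y)}$ or the reduction furnished by Proposition~\ref{prop-ggbf}). Verifying that these constraints are jointly decisive in each of the few surviving decomposition types is the delicate bookkeeping at the heart of the lemma.
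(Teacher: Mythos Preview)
Your enumeration of the admissible prime sets $\cP(k_i)$ and your treatment of part (b) are correct and essentially match the paper's proof; the paper rules out $\{2,5,7\}$, $\{3,5,7\}$ and $\{2,3,5,7\}$ via the Corollary~\ref{coro-rr} bound rather than Proposition~\ref{prop-nmini}(b), but the effect is the same. One small point in (b): the claim ``each block must be inverse-closed'' deserves a word of justification. It holds because no element of $g_2P_3^*h$ can equal an element of $P_7^*h^{-1}$ (that would force the $2$-component of $h^2$ to be $g_2$, which is impossible), so the two blocks cannot swap under inversion; and once $P_7^*h=P_7^*h^{-1}$ one gets $P_7^*h^2=P_7^*$, forcing $h^2=e$ directly. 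Your appeal to the $P_3^*$-block alone would leave open the spurious case $h^2=g_3$.

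Where you diverge from the paper is in part (a), and the divergence is caused by what is evidently a misprint in the statement: the paper writes ``$h_j\in C_{30}$'' but its entire proof of (a) is the clause ``(a) then follows from Lemma~\ref{lem-dmini}'', which, combined with Proposition~\ref{prop-nmini}(a), only yields $D_i=P_jh_j$ with $h_j\in C_m$. That is also how the lemma is \emph{used}: in the proof of Lemma~\ref{lem-7kx} the decomposition is written $E_x=P_2X+P_3Y+P_5Z$ with $X,Y,Z\in\N[C_m]$, not $\N[C_{30}]$, and the confinement to $C_{30}$ is established only later, in Lemma~\ref{lem-7kx} and the proof of Theorem~\ref{thm-n3}. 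So the case analysis you sketch for (a) --- partitions of $8$, self-inversion, parity of the $g^0$-coefficient, Lemma~\ref{lem-q1q2q3} --- is unnecessary here, and in fact cannot succeed at this stage: for instance $E_x=P_2W$ with arbitrary $W\in\N[C_m]$, $\|W\|=4$, satisfies every constraint you list, so those hypotheses alone do not pin the translates to $C_{30}$. Read ``$C_m$'' for ``$C_{30}$'' in (a) and your proof is complete.
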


\begin{proof} 
  Let $k_x$ be the reduced exponent of $E_x$. Note that $k_x\neq 2\cdot 3 \cdot 5 \cdot 7$, $3 \cdot 5 \cdot 7$, or $2\cdot 5 \cdot 7$ as $||E_x|| > (7-2)+(5-2)+2 > 8$.  Therefore, either $|{\cP}(k_i)|=1$,  ${\cP}(k_i)=\{2,3,7\}$ or $\{2,3,5\}$. (a) then follows from Lemma~\ref{lem-dmini}. 

For (b), note that $||D_i|| \leq 8$. Hence, by Proposition~\ref{prop-nmini} (b), $D_i = h  (P_7^* + g_2P_3^*)$ for some element $h \in C_m$. As $||E_x||=8$, $E_x = D_i$. As $E_x = E_x^{(-1)}$, we have $h=g_2^{\alpha}$ for some integer $\alpha$. 
\end{proof}

\begin{corollary}\label{coro-7kx}
If $7|k_x$, then $E_x = g_2^{\alpha}(P_7^* + g_2P_3^*)$. 
\end{corollary}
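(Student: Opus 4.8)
\textbf{Proof plan for Corollary~\ref{coro-7kx}.}

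The statement to prove is: if $f$ is an $(m,3)$-GBF with $m = 2\cdot 3^a5^b7^c$ and $7 \mid k_x$ (the reduced exponent of $E_x$, i.e.\ the c-exponent in the language of Lemma~\ref{lem-pki}), then in fact $E_x = g_2^\alpha(P_7^* + g_2 P_3^*)$ for some integer $\alpha$. The plan is to run the decomposition $E_x = \sum_i D_i$ into minimal v-sums with reduced exponents $k_i$ supplied by Lemma~\ref{lem-pki}, and argue that the hypothesis $7\mid k_x$ forces exactly one summand, whose prime support must be $\{2,3,7\}$, after which part (b) of Lemma~\ref{lem-pki} finishes the job. First I would observe that since $7\mid k_x = \lcm(k_1,\dots,k_t)$, there is some index, say $i=1$, with $7\mid k_1$. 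By Lemma~\ref{lem-pki}, the only possibilities for $\cP(k_1)$ containing $7$ are $\{7\}$ or $\{2,3,7\}$.

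Next I would rule out $\cP(k_1) = \{7\}$. In that case $D_1 = P_7 h_7$ by Lemma~\ref{lem-pki}(a), so $\|D_1\| = 7$, and since $E_x$ is a v-sum with $\|E_x\| = \chi_0(E_x) = 2^3 = 8$ (recall $E_x = \sum_{y\in G} g^{f(y+x)}g^{-f(y)}$ has all nonnegative integer coefficients, so $\|E_x\| = |E_x| = 2^n = 8$), we would need $E_x = D_1 + D'$ with $\|D'\| = 1$. But a single nonzero term is never a v-sum (applying a character of order $m$ to a monomial gives a root of unity, not $0$), so $D'$ cannot be decomposed into minimal v-sums unless $D' = 0$; hence $E_x = D_1 = P_7 h_7$, forcing $7 \mid \|E_x\| = 8$, a contradiction. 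Therefore $\cP(k_1) = \{2,3,7\}$. Then $D_1$ is a minimal v-sum on $\{2,3,7\}$, so by Proposition~\ref{prop-nmini}(b), $\|D_1\| \geq (2-1)(3-1) + (7-1) = 8$. Since $\|D_1\| \leq \|E_x\| = 8$, we get $\|D_1\| = 8 = \|E_x\|$, so $E_x = D_1$ is itself this minimal v-sum and the equality case of Proposition~\ref{prop-nmini}(b) applies: $E_x = h(P_7^* + g_2 P_3^*)$ for some $h \in C_m$. Finally, using $E_x = E_x^{(-1)}$ (Lemma~\ref{lem-ex}(a)), comparing $h(P_7^* + g_2 P_3^*)$ with its inverse $h^{-1}(P_7^* + g_2^{-1}P_3^*)$ and matching the two ``components'' (the $P_7^*$-part supported on a coset of $P_7$ and the $P_3^*$-part on a coset of $P_3$), one deduces $h \in P_2 = \langle g_2\rangle$, i.e.\ $h = g_2^\alpha$; this is exactly the last step already carried out in the proof of Lemma~\ref{lem-pki}(b), so I would simply invoke it.

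I do not anticipate a serious obstacle here: the result is essentially an immediate packaging of Lemma~\ref{lem-pki} together with the arithmetic constraint $\|E_x\| = 8$. The only point requiring a little care is the elimination of $\cP(k_1) = \{7\}$, where one must note that the ``leftover'' $E_x - D_1$ of norm $1$ cannot be a (sum of) v-sum(s); this is where the fact that $2^n = 8$ is small and $7$ is close to $8$ does the work. An alternative, perhaps cleaner, phrasing of that step: if $7 \mid k_x$ but $\cP(k_i) \neq \{2,3,7\}$ for the summand $D_i$ with $7 \mid k_i$, then $\cP(k_i) = \{7\}$ gives $\|D_i\| = 7$ and hence $\|E_x\| \geq 7 + \|D_j\|$ for any other summand $D_j$; but every minimal v-sum has norm at least $2$ by Corollary~\ref{coro-rr}, forcing $\|E_x\| \geq 9 > 8$ unless there is no other summand, i.e.\ $E_x = P_7 h$, contradicting $7 \nmid 8$. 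Either way the corollary follows.
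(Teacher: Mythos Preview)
Your proposal is correct and follows essentially the same approach as the paper: decompose $E_x$ into minimal v-sums, observe that $7\mid k_x$ forces $7\mid k_i$ for some $i$, rule out $k_i=7$ because the leftover would have norm $1$ and hence could not be a v-sum, and then appeal to the $\{2,3,7\}$ case. The only cosmetic difference is that where you re-derive the shape of $E_x$ from Proposition~\ref{prop-nmini}(b) and the symmetry $E_x=E_x^{(-1)}$, the paper simply invokes Lemma~\ref{lem-pki}(b), which already packages exactly that computation.
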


\begin{proof} 
We will follow the notation used above. By assumption, $7|k_i$ for some $i$. If $k_i=7$, then $D_i=P_7 h_i$. Since  $||E_x||=8$, it follows that $||D_j||=1$ if $j\neq i$. This is impossible as then $\tau(D_j)\neq 0$. Hence, $k_i$ is not a prime and therefore, $k_i=2 \cdot 5 \cdot 7$. By Lemma~\ref{lem-pki} (b), our desired result follows. 
\end{proof}

Let $\psi$ be as defined in Section~\ref{sec-main}. As we have seen before, $a_x=\psi(E_x)\equiv 0 \bmod 4$. With the condition $E_x=E_x^{(-1)}$, this allows us to narrow down the possibilities of $E_x$ when $7$ does not divide the c-exponent of $E_x$.

\begin{lemma}\label{lem-7kx}
If $7\nmid k_x$, then $E_x$ is in one of the forms below: 
\begin{itemize}
\item [(a)] $E_x=P_2W$ and $a_x=0$.
\item [(b)] $E_x=(P_3+P_5)g_2^{\alpha}$ and $a_x=\pm 8$.
\item [(c)]  $E_x=g_2^{\alpha}[g_2(g^0+g_5+g_5^4)(g_3+g_3^2)+(g_5^2 + g_5^3)]$ or $g_2^{\alpha}[g_2(g^0+g_5^2+g_5^3)(g_3+g_3^2)+(g_5 +g_5^4)]$ and $a_x=\pm 4$. In particular, $supp(E_x) \cap P_2 = \emptyset$. [Recall that $g^0$ is the identity of $C_{m}$. ]
\end{itemize}
\end{lemma}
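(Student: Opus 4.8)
The plan is to start from the structural decomposition of $E_x$ provided by Lemma~\ref{lem-pki}, then use the constraints $E_x = E_x^{(-1)}$ and $a_x = \psi(E_x) \equiv 0 \pmod 4$ together with the total mass $\|E_x\| = 2^3 = 8$ to enumerate all possibilities. Since $7 \nmid k_x$, every minimal v-sum $D_i$ in the decomposition $E_x = \sum D_i$ has reduced exponent $k_i$ with $\mathcal{P}(k_i) \in \{\{2\},\{3\},\{5\},\{2,3,5\}\}$. I would split into cases according to whether any $D_i$ has reduced exponent divisible by $5$.

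First I would handle the case where no $D_i$ involves $5$, i.e.\ each $k_i \in \{2,3\}$. Then $E_x = \sum_j P_2 h_j + \sum_l P_3 h'_l$, and $\|E_x\| = 8$ forces the number of $P_2$-summands plus the number of $3\cdot(\text{$P_3$-summands})$ to equal $8$; parity modulo $3$ then restricts this to either all $P_2$'s (giving $E_x = P_2 W$ with $\|W\| = 4$, and $\psi(P_2) = 0$ so $a_x = 0$, case (a)) or exactly $6 = 2\cdot 3$ from a single $P_3 h'$ plus... wait, one $P_3$ contributes $3$ and we need total $8$, which is not a multiple of the remaining options unless we mix — concretely $8 = 3+3+2$, so two $P_3$-summands and one $P_2$-summand, or $8 = 3 + 5$ which re-enters the $5$-case. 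For the pure $\{2,3\}$ configuration I would use $E_x = E_x^{(-1)}$ and the $4\mid a_x$ condition: the $P_2$-part contributes $0$ to $\psi$ and each $P_3 h'$ contributes $\psi(P_3)\psi(h') = 3\psi(h') = \pm 3$ (since $\psi(h')=\pm1$), so $a_x \in \{0, \pm 3, \pm 6\}$ from this part, and only $0$ is divisible by $4$; this kills the mixed configurations and leaves only case (a). Next, the case where some $D_i$ has $\mathcal{P}(k_i) = \{5\}$ but none has $\mathcal{P}(k_i) = \{2,3,5\}$: then $E_x$ is a sum of $P_2 h$'s, $P_3 h'$'s and $P_5 h''$'s with masses summing to $8$. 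Here $8 = 5 + 3$ gives $E_x = (P_3 + P_5)g_2^\alpha$ after using $E_x = E_x^{(-1)}$ (the odd order of the relevant roots of unity, via the argument in Lemma~\ref{lem-q1q2q3}, forces the translate to lie in $\langle g_2\rangle$, and then $4\mid m$ being false pins it to $g_2^\alpha$), with $\psi(E_x) = \psi(P_3)+\psi(P_5) = 3 \pm 1$ — hmm, that is $\pm 8$ only if both contribute with the same sign, i.e.\ $\psi(g_2^\alpha)(3+5\cdot 0)$; I need to recompute using $\psi(P_3) = 0$ when $3\mid$ order issues — actually $\psi$ sends $g$ to $-1$, so $\psi(P_3) = 1 + \psi(g_3) + \psi(g_3^2) = 1 + 1 + 1 = 3$ and $\psi(P_5) = 5$, giving $a_x = \pm(3+5) = \pm 8$, case (b); and $8 = 5+3$ is the only split avoiding a lone mass-$5$ term being unaccompanied (a single $P_5 h''$ cannot stand alone since then the remaining mass $3$ would be a $P_3$, which is the $5+3$ split, or mass $3 = 2+1$ impossible).

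The remaining and most delicate case is when some $D_i$ has $\mathcal{P}(k_i) = \{2,3,5\}$. By Proposition~\ref{prop-nmini}(b), such a $D_i$ has $\|D_i\| \geq (2-1)(3-1) + (5-1) = 6$ — wait, the ordering matters: with primes $2<3<5$, the bound is $(p_1-1)(p_2-1)+(p_3-1) = 1\cdot 2 + 4 = 6$, with equality only if $D_i = (P_1^* P_2^* + P_3^*)h = (P_2^* P_3^* + P_5^*)h$. Since $\|E_x\| = 8$, the leftover mass is $2$, which must be a single $P_2 h'$ — but a lone $P_2$ makes $\tau(E_x)\ne 0$ unless it combines, contradiction — so in fact $E_x = D_i$ is impossible at mass $6$ with a leftover, forcing me instead to allow $\|D_i\| = 8$ directly, i.e.\ $D_i$ is a mass-$8$ minimal v-sum with $\mathcal{P}(k_i) = \{2,3,5\}$. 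I would then invoke the classification in~\cite{LL00} of minimal v-sums of small weight over $C_{30}$ to list these explicitly; imposing $E_x = E_x^{(-1)}$ and $4 \mid a_x$ should cut the list down to exactly the two displayed forms in (c), each with $a_x = \pm 4$ (computed as $\psi$ of the displayed element: the factor $g_2(g^0 + g_5 + g_5^4)(g_3+g_3^2)$ maps to $(-1)\cdot(1+\psi(g_5)+\psi(g_5^4))\cdot 2 = -2(1+1+1)\cdot(\pm 1)$ — I will recheck the arithmetic so it lands on $\pm 4$), and the observation $supp(E_x)\cap P_2 = \emptyset$ is read off directly since every term of the displayed elements involves a nontrivial power of $g_3$ or $g_5$. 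The main obstacle is precisely this last case: correctly extracting from~\cite{LL00} the complete list of mass-$8$ minimal v-sums with support in $C_{30}$ having all three primes in the reduced exponent, and then carefully verifying that the self-conjugacy and congruence conditions isolate exactly the stated pair — this is the step I expect to require the most care, since it rests on the fine structure results of~\cite{LL00} rather than on the coarse mass bound of Proposition~\ref{prop-nmini}.
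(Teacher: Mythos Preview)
Your overall plan matches the paper's approach, but two steps are genuinely wrong or missing.

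First, your dismissal of the split $\|D_1\|=6$, $\|D_2\|=2$ is incorrect: you claim ``a lone $P_2$ makes $\tau(E_x)\ne 0$'', but $P_2 h'$ \emph{is} a minimal v-sum (any character of even order annihilates $P_2$), so nothing about vanishing excludes this configuration. The paper rules it out instead by computing $\psi(E_x)=\psi\bigl((P_2^*P_3^*+P_5^*)h\bigr)+\psi(P_2h')=((-1)(2)+4)(\pm1)+0=\pm 2$, which violates $4\mid a_x$. (A related slip: in your $3{+}3{+}2$ split you list $a_x\in\{0,\pm 3,\pm 6\}$ and say this ``kills the mixed configurations'', but $a_x=0$ is certainly compatible with $4\mid a_x$. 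One must either use $E_x=E_x^{(-1)}$ to force $a_x=\pm 6$, or observe that $a_x=0$ here only occurs when $P_3h_2+P_3h_3=P_3P_2$, which is already of the form $P_2W$.)

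Second, and this is the heart of the lemma, case~(c) cannot be read off from~\cite{LL00}. That reference supplies the lower bound recorded in Proposition~\ref{prop-nmini}, not a classification of mass-$8$ minimal v-sums over $C_{30}$ with reduced exponent $30$. The paper argues directly: once $E_x$ is a single minimal v-sum in $\N[C_{30}]$, write $E_x=\sum_{i=0}^4 A_i g_5^i$ with $A_i\in\N[C_6]$. Minimality forces every $A_i\ne 0$, and a short argument shows $\|A_j\|\le 2$ for all $j$ (if some $\|A_\ell\|\ge 3$, symmetry $E_x=E_x^{(-1)}$ forces $\ell=0$ and all other $\|A_i\|=1$; then $\tau(A_0+g_2A_1)=0$ with $\|A_0+g_2A_1\|=5$ exhibits a proper sub-v-sum inside $A_0$, contradicting minimality). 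The constraint $\|A_i\|=\|A_{5-i}\|$ then leaves only the mass profiles $(2,2,1,1,2)$ and $(2,1,2,2,1)$; in each, the two singleton $A_i$'s are equal to a common $g_2^\alpha$, and $\tau(A_j+g_2^{\alpha+1})=0$ with $\|A_j+g_2^{\alpha+1}\|=3$ and $g_2^{\alpha+1}$ in the support forces $A_j=P_3^*\,g_2^{\alpha+1}$ for the mass-$2$ indices, yielding exactly the two displayed forms. This explicit computation is what you are deferring, and it is the main content of the lemma.
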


\begin{proof}
  We continue with the notation used in Lemma~\ref{lem-pki}. If all $k_i$'s are prime, then in view of Lemma~\ref{lem-pki}, 
\[ 
 E_x=P_2X+P_3Y+P_5Z,
\]
 where $X,Y,Z\in \N[C_m]$. As $||E_x||=8$ and $8=2||X||+3||Y||+5||Z||$. It is clear that 
\[ 
 (||X||,||Y||,||Z||)=(4,0,0), \ (1,2,0), \ \mbox{ or } (0,1,1).
\]
 
If $(||X||,||Y||,||Z||)=(1,2,0)$, then $E_x=P_2(h_1+h_2)+P_3h_3$. In this case, $\psi(E_x)=\pm 3$. This is impossible. Next, if $(||X||,||Y||,||Z||)=(4,0,0)$, then (a) holds. If $(||X||,||Y||,||Z||)=(0,1,1)$, then $E_x=P_3h_1+P_5h_2$. By Lemma~\ref{lem-q1q2q3} (a), $h_i=g_2^{\alpha_i}$. Note that $\psi(E_x)=\pm 2\neq \pm 4$ if $\alpha_1\neq \alpha_2 \bmod 2$. Since $a_x \equiv 0 \bmod{4}$, (b) holds.

As $k_i$'s are not all prime, we may assume that $k_1$ is not a prime. Then by Lemma~\ref{lem-pki}, $k_1=2\cdot 3 \cdot 5$. But then by Proposition~\ref{prop-nmini} (b), $||D_1||\geq 6$. If $E_x\neq D_1$, then $||D_2||\leq 2$. Hence $D_2=P_2 h'$ for some $h'\in C_m$ and $||D_1||=6$. Thus, $D_1=(P_2^*P_3^*+P_5^*)h$ for some $h\in C_{30}$. Since $||E_x||=8$, $E_x=D_1+D_2$. But $\psi(E_x)=\psi(D_1+D_2)=\pm 2$. This is impossible as $4|a_x$.  Hence, $E_x$ is a minimal v-sum and
$E_x = D_1 = Dh$ for some $D\in \N[C_{30}]$. As $E_x=E_x^{(-1)}$, we have $h\in C_{30}$. So, $E_x\in \N[C_{30}]$.  We may write $E_x=\sum_{i=0}^4 A_i g_5^i$, where $A_i\in \N[C_{6}]$. Clearly, 
\[ 8=\sum_{i=0}^4 ||A_i||.\]

Let $\tau$ be a character of order $30$. If $A_i=0$ for some $i$, then $\tau(A_j)=0$ for all $j$ as $\tau(E_x)=0$. Then, $E_x$ is not a minimal v-sum unless $E_x = A_j$ for some $j$. So, $k_1 | 6$ and $k_1 \ne 30$. This is impossible. Hence, $||A_i||\geq 1$ for each $i$. 

\noindent {\bf Claim.} $||A_j|| \leq 3$ for all $j = 0, \ldots, 4$. 

Otherwise, we assume that $||A_{\ell}||\geq 3$ for some $\ell$. It then follows that $||A_j||\leq 2$ if $j\neq \ell$. Since $E_x = E_x^{(-1)}$, we have $\ell=0$.  On the other hand, if $||A_j|| = 2$ for some $j$,  then again $||A_t||\neq 2$ whenever $t \neq j$. Using the condition $E_x = E_x^{(-1)}$ again, we have $j=0$. This is impossible. Hence, all other $||A_j||=1$. Thus we conclude, $||A_{0}||=4$ and $||A_i||=1$ if $i = 1, 2, 3, 4$. Write $A_1 = h$, where $h\in C_6$. As
$\tau(A_0) = \tau(h)$, we have $\tau(A_0 + g_2h) = 0$. Note that $||A_0 + g_2 h|| = 5$. Since $\tau(A_0 + g_2h) = 0$,  we may apply a similar argument as in Lemma~\ref{lem-pki} to conclude that $A_0 + g_2h = P_2 h_1 + P_3 h_2$ for some $h_1, h_2\in C_6$. Therefore, $A_0 = P_2h_1 + h_3 + h_4$ or $A_0 = h_3 + P_3h_2$ for some $h_3, h_4\in C_6$. In either case, it contradicts the assumption that $E_x$ is a minimal v-sum. 

Hence, we conclude that $||A_j|| \leq 2$ for all $j$.  Using the assumption that $E_x = E_x^{(-1)}$ again, we then obtain two possible cases.

(i) $||A_0||=||A_1||=||A_4||=2$ and $||A_2||=||A_3||=1$ or 

(ii) $||A_0||=||A_2||=||A_3||=2$ and $||A_1||=||A_4||=1$.

It remains to show that $E_x$ is of the desired form when (i) holds. We may assume that $A_i = h_i$ for some $h_i\in C_6$ for $i=2, 3$. Since $\tau(E_x)=0$ for any character $\tau$ of order $30$, we set $h_2 = h_3 = h$. As $E_x = E_x^{(-1)}$, we see that $h=g_2^{\alpha}$. 

Note that for $i=0, 1, 4$, $||A_i + g_2h||=3$ and $\tau(A_i + g_2h)=0$. Therefore, $A_i+g_2h = P_3g_2h$ as $g_2h$ is in the support of all $A_i + g_2h$. In other words, $A_i = P_3^*(g_2h)$ for $i=0, 1, 4$. It is now clear that $E_x$ is of desired form. 
This shows that (c) holds. 
\end{proof}

\begin{theorem}\label{thm-n3}
  There is no $(m,3)$-GBF for any integer $m$ odd or $m \equiv 2 \pmod{4}$. 
\end{theorem}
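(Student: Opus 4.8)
The plan is to assume an $(m,3)$-GBF $f$ exists, where by Proposition~\ref{prop-ggbf} and Corollary~\ref{coro-main1} we may take $m = 2 \cdot 3^a 5^b 7^c$, and derive a contradiction by a careful counting argument using the explicit classification of $E_x$ obtained in Lemmas~\ref{lem-pki}, \ref{lem-7kx} and Corollary~\ref{coro-7kx}. Recall the setup from Section~\ref{sec-main}: with $G = C_2^3$ so $|G| = 8$, $G_f = \{ x \in G : f(x) \text{ odd}\}$ with $|G_f| \le 4$, and $a_x = \psi(E_x) = 8 - 4|G_f| + 8 b_x$ for $x \ne 1_G$, where $G_f^2 = |G_f| + 2\sum_{x \ne 1_G} b_x x$. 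The heart of the argument is to show that the constraints forced on the $E_x$'s by the classification lemmas, together with the GBF equation $D_f D_f^{(-1)} = 2^3 + \sum_{x \ne 1_G} E_x x$, are mutually inconsistent.

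First I would handle the primes $5$ and $7$ one at a time. For the prime $7$: by Corollary~\ref{coro-7kx}, if $7 \mid k_x$ for some $x$, then $E_x = g_2^\alpha(P_7^* + g_2 P_3^*)$, which has $supp(E_x) \cap P_2 = \emptyset$ and whose coefficient of $g^0$ is $0$ or $1$ depending on $\alpha$ — but by Lemma~\ref{lem-ex}(a) that coefficient must be even, forcing a specific parity of $\alpha$. I would count: the number of $x \ne 1_G$ with $7 \mid k_x$ must be controlled because each such $E_x$ contributes a full copy of $P_7^*$ (five elements of order divisible by $7$) to $D_f D_f^{(-1)}$, yet the total number of group-ring ``slots'' in $\Z[C_m][G]$ with $C_7$-part nontrivial is limited by how $f$ can hit order-$7$ elements; via Lemma~\ref{lem-ab} applied to $7$ and the other primes, if $7 \mid m$ then some $supp(E_y)$ contains an element of order divisible by $7 \cdot q$, but the classification shows every $E_x$ involving $7$ is exactly $g_2^\alpha(P_7^* + g_2 P_3^*)$, whose elements have order dividing $2 \cdot 3 \cdot 7$ with no factor of $5$ — so $7 \cdot 5 \nmid \circ(h)$ for any $h \in supp(E_x)$. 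Thus if $5 \mid m$ too, Lemma~\ref{lem-ab} with $p = 5, q = 7$ gives a contradiction unless $c = 0$ or $b = 0$. Iterating, I expect to reduce to $m \in \{2 \cdot 3^a, 2 \cdot 3^a 5^b, 2 \cdot 3^a 7^c\}$ with at most two odd primes, and then Theorem~\ref{thm-main2} (with $n = 3$) already kills $m = 2 p^\alpha$; so really only the genuinely two-odd-prime cases $2 \cdot 3^a 5^b$ and $2 \cdot 3^a 7^c$ need the detailed work.

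For the remaining cases I would run the parity/counting argument on $a_x$ and $b_x$. Since $4 \mid a_x$ for all $x \ne 1_G$ (from $\psi(P_3) \mid a_x$ or $\psi(P_5)\mid a_x$ or $\psi(P_7)\mid a_x$ and the factor $4$ or $8$ in Equation~(\ref{eqn-ax})), and $a_x \in \{0, \pm 4, \pm 8\}$ by Lemmas~\ref{lem-7kx} and Corollary~\ref{coro-7kx} ($a_x = \pm 8$ for $E_x = g_2^\alpha(P_7^*+g_2P_3^*)$ as well), I would use $\sum_{x \ne 1_G} a_x = \psi(D_f)\psi(D_f^{(-1)}) - 8 = |\psi(D_f)|^2 - 8$. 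But $\psi(D_f) = G - 2G_f = \chi_0$-value... more precisely $\psi(D_f) = |G| - 2|G_f| = 8 - 2|G_f|$ as an integer (since $\psi$ kills the $C_m$-part by sending $g \mapsto -1$ and then... no — $\psi$ fixes $G$, so $\psi(D_f) = G - 2G_f \in \Z[G]$, a group-ring element, and $\psi(D_f)\psi(D_f^{(-1)}) = (8 - 4|G_f|)G + 4 G_f^2$). Comparing coefficients of $1_G$: $8 = 8 - 4|G_f| + 4|G_f|$, consistent; comparing the coefficient of a fixed $x \ne 1_G$ gives $a_x = 8 - 4|G_f| + 8b_x$ as stated. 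So the real constraint is: for \emph{every} $x \ne 1_G$, $8 - 4|G_f| + 8b_x \in \{0, \pm 4, \pm 8\}$, i.e. $4 \mid (8 - 4|G_f|)$ always, and if $|G_f|$ is even then $a_x \equiv 8 - 4|G_f| \pmod 8$ must lie in $\{0, \pm 8\}$ forcing $|G_f| \in \{0, 2, 4\}$ with $b_x$ then pinned down, while if $|G_f|$ is odd then $a_x \equiv 4 \pmod 8$, so $a_x = \pm 4$ for \emph{all} $x \ne 1_G$, which by Lemma~\ref{lem-7kx}(c) forces $E_x = g_2^\alpha[g_2(g^0+g_5+g_5^4)(g_3+g_3^2)+(g_5^2+g_5^3)]$-type for every $x$ (in particular $5 \mid k_x$ and $7 \nmid k_x$ for all $x$), and then $supp(E_x) \cap P_2 = \emptyset$ for all $x$, so $\psi$ applied to $D_f D_f^{(-1)}$ restricted to... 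I would then derive a contradiction with Lemma~\ref{lem-ab} (the case $p=3$, $q=5$ needs an element of order divisible by $15$ in some $supp(E_y)$, which the listed forms in Lemma~\ref{lem-7kx}(c) do supply, so here I'd instead count multiplicities) — concretely, if $E_x$ has the form (c) for all seven values $x \ne 1_G$, sum the $|| \cdot ||$: $\sum_{x\ne 1_G} ||E_x|| = 7 \cdot 8 = 56$, but also $\sum_{x \in G} ||E_x|| = ||D_f D_f^{(-1)}|| = ||D_f||^2 / \text{(something)}$... no, $||D_f D_f^{(-1)}|| \le ||D_f||\,||D_f^{(-1)}|| = 8 \cdot 8 = 64$, and $E_{1_G} = 2^3 = 8$ contributes $8$, leaving $\le 56$ for the rest — tight, forcing $||D_f D_f^{(-1)}|| = 64$, which happens only if no cancellation occurs, i.e. all products $g^{f(y+x)} g^{-f(y)} x$ are distinct, which I can then contradict by exhibiting a forced coincidence (e.g. from $f(1_G) = 0$ and the $C_2$-structure).

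The main obstacle will be the bookkeeping in the odd-$|G_f|$ case with $m = 2 \cdot 3^a 5^b$: showing that the rigid form of $E_x$ from Lemma~\ref{lem-7kx}(c) cannot hold simultaneously for all $x \ne 1_G$. I would attack this by the norm/support-counting just sketched (total weight $64$, forcing a multiplication table with no collisions in $C_m \cdot G$), combined with the observation that $E_x = E_x^{(-1)}$ and the coefficient of $g^0$ in each $E_x$ is even (Lemma~\ref{lem-ex}(a)) — but the two forms in (c) both have coefficient $0$ at $g^0$ when $\alpha$ is odd and coefficient... one needs to check; in any case pinning $\alpha \bmod 2$ for each $x$ and then tracking how the $(g_5^2+g_5^3)$ versus $(g_5+g_5^4)$ ``type'' must be consistent across the three $x$ in any coset $\langle x \rangle$ should produce the contradiction. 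The even-$|G_f|$ cases ($|G_f| = 0$: trivially an $(m',3)$-GBF, impossible by Corollary~\ref{coro-main1}; $|G_f| = 2$: $a_x = 8b_x$, so $a_x \in \{0, 8\}$ since $b_x \ge 0$ and $b_x \le |G_f|/2 = 1$; $|G_f| = 4$: $a_x = -8 + 8b_x \in \{-8, 0, 8, 16\} \cap [-8,8] $ with $b_x \le 2$) I expect to dispatch quickly by the same weight count together with the fact that $G_f$ is not a difference set (Lemma~\ref{lem-mann}), which forces the $b_x$ to be non-constant and hence forces some $a_x = 0$ alongside some $a_x = \pm 8$, again over-filling or under-filling the weight budget of $64$.
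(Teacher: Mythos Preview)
Your proposal identifies the right setup and several correct observations (the reduction to $m = 2\cdot 3^a 5^b 7^c$, the parity of $|G_f|$ determining $a_x \bmod 8$, and the use of Lemma~\ref{lem-ab}), but the central engine you propose --- the ``weight budget'' argument --- is vacuous and the proof collapses at exactly the hard cases.

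The identity $\|D_f D_f^{(-1)}\| = 64$ holds \emph{automatically}: since $D_f \in \N[C_m \cdot G]$ with $|D_f| = 8$, the product $D_f D_f^{(-1)}$ has all nonnegative coefficients and $\|D_f D_f^{(-1)}\| = |D_f|\,|D_f^{(-1)}| = 64$. Likewise $\|E_x\| = 8$ for every $x$ (it is a sum of eight elements of $C_m$, counted with multiplicity), and $\|E_{1_G}\| = \|8\cdot g^0\| = 8$. So your inequality is always an equality and carries no information; in particular ``no cancellation'' is not a constraint you can exploit, and your plan to ``exhibit a forced coincidence'' has nothing to bite on. This is fatal for both the odd-$|G_f|$ case and the even-$|G_f|$ cases, where you rely on the same counting.

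A second, smaller gap: your elimination of the situation $5\mid m$ and $7\mid m$ via Lemma~\ref{lem-ab} with $(p,q)=(5,7)$ only inspects the $E_x$ with $7\mid k_x$. You must also rule out elements of order divisible by $35$ in $supp(E_x)$ when $7\nmid k_x$; form~(a) of Lemma~\ref{lem-7kx} is $E_x = P_2 W$ with $W\in\N[C_m]$, and nothing you have said prevents $W$ from containing an element whose order is divisible by $35$. The paper circumvents this by a different route: it shows that if $7$ divides some $k_x$ then $a_x=\pm 4$, hence $|G_f|$ is odd, hence $a_v=\pm 4$ for \emph{all} $v\neq 1_G$, so every $E_v$ is of the form in Corollary~\ref{coro-7kx} or Lemma~\ref{lem-7kx}(c); then Lemma~\ref{lem-ab} with $(p,q)=(3,7)$ gives the contradiction (no element of order divisible by $21$ appears). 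After that, Proposition~\ref{prop-ggbf} reduces to $m=2\cdot 3^a 5^b$.

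For the genuine case analysis on $|G_f|$ the paper does something you are missing entirely: it passes to auxiliary ring homomorphisms. For $|G_f|=2$ (and similarly $|G_f|=4$) it builds $\eta:\Z[G\cdot C_{30}]\to\Z[C_5]$ killing $P_2$ and $P_3$ and shows $\eta(D_f)\eta(D_f)^{(-1)} = 11 + P_5$, then checks by hand that this has no solution $\sum a_i g_5^i$ in $\Z[P_5]$ with $\sum a_i = \pm 4$. For $|G_f|\in\{1,3\}$ it writes $D_f=\sum_{i=0}^4 B_i g_5^i$ with $B_i\in\Z[G\cdot C_6]$, uses the constraint $supp(E_x)\cap P_2=\emptyset$ to bound each $\|B_i\|\le 3$, balances $\sum \|B_i\|^2 = 22$, and then kills the remaining configuration with a carefully chosen character $\phi$ on $G\cdot C_{30}$. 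None of this is a weight count; each step needs a new idea.
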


\begin{proof} 
  
Recall that by earlier discussion of this section, we may assume that $m = 2 \cdot 3^a \cdot 5^b \cdot 7^c$. We first remove the case $7|m$. 

We may assume that $7$ divides the c-exponent of $E_x$ for some $x \neq 1_G$. By Lemma~\ref{lem-7kx}, we see that $E_x = h^{\alpha}(P_7^* + hP_3^*)$ and $\psi(E_x)=\pm 4$. It follows from Equation (\ref{eqn-bx})  that $a_v=\pm 4$ for any $v\neq 1_G$. Therefore, $E_v$ is of the form in Corollary~\ref{coro-7kx} or Lemma~\ref{lem-7kx} (c). That means there is no element in $supp(E_v)$ of order a multiple of $21$ for any $v$. This contradicts Lemma~\ref{lem-ab}. Thus, we may assume that $7$ does not divide the c-exponent of $E_x$ for all $x\in G$. By Proposition~\ref{prop-ggbf}, it remains to show that $(2\cdot 3^a \cdot 5^b, 3)$-GBF does not exist.

In view of Lemma~\ref{lem-7kx}, $E_x\in \N[C_{30}]$ for all $x\neq 1_G$. It follows that $supp(D_f)\subset G\cdot C_{30} h'$ for some $h'\in C_m$. After multiplying $D_f$ with $h'^{-1}$, we may assume $D_f\in \N[G\cdot C_{30}]$. Recall that we may assume that $1 \leq |G_f| \leq 4$. We may assume that $1_G \in G_f$ instead of $1_G \in G \setminus G_f$. We now discuss by cases. 

\noindent {\bf Case (1)} $|G_f| = 2$.  

As $1_G \in G_f$, we write $G_f = \{1_G, v\}$. Note that $a_x = 8$ or $0$. It follows that $a_v = 8$ and $a_x = 0$ if $x \neq 1_G, v$. By Lemma~\ref{lem-7kx}, we have $E_v = P_5 + P_3$ and $E_x = P_2 W_x$ for some $W_x\in \Z[C_{30}]$ if $x\neq 1_G, v$. 

Let $\eta:\Z[G\cdot C_{30}] \rightarrow \Z[G\cdot C_{5}]$ be a ring homomorphism such that $\eta(g_2) = -1$ and $\eta(g_3) = 1$ and $\eta(g_5) = g_5$; and $\eta(x) = 1$ for all $x \in G$.  Note that $\eta(E_x) = 0$ if $a_x = 0$ as $E_x = P_2W_x$ for some $W_x\in \N[C_{30}]$. Thus, we get 
\[ 
\eta(D_f)\eta(D_f)^{(-1)} = 11+P_5.
\]

Write $\eta(D_f)=\sum a_ig_5^i$ where $a_i\in \Z$. Observe that if we further map $g_5$ to $1$, then the resulting map is just $\psi$. Thus, we have $\sum a_i=\psi(D_f)$. Then as $|G_f| = 2$, $\psi(D_f) = \sum a_i = 8 - 2\cdot 2 = 4$. By considering the coefficient of identity of $11+P_5$, we get $\sum a_i^2=12$. Thus $|a_i| \geq 2$ for some $i$. If the maximum value of $|a_i|$ is $2$, then there must be two more $a_j$'s with $|a_j|=2$ and the rest is $0$. That is impossible as then $2$ divides $\eta(D_f)$ but $2$ does not divide $11+P_5$ in $\Z[P_5]$. 

Hence, the maximum value of $|a_i|$ is $3$. Then there are exactly three $a_j$'s with $|a_j| = 1$. Since $\sum a_i = 4$, exactly one $a_i$ is $-1$. So we may assume that $\eta(D_f) = 3 + g_5 + g_5^\beta - g_5^\gamma$ with $1 \neq \beta \neq \gamma \neq 1$. Clearly, we may assume either $\beta = 4$ or $\gamma = 4$.  

If $\beta=4$, then we may take $\gamma = 2$ or $3$. Then, the coefficient of $g_5^\gamma$ is $-2$, which is impossible. If $\gamma = 4$, then $\beta = 2$ or $3$. In that case, the coefficient of $g_5^\beta$ is $-2$, which is also impossible. Therefore, we have $|G_f| \ne 2$.

\noindent {\bf Case (2)} $|G_f| = 4$.
We may assume that $G_f=\{1_G, v_1, v_2, v_3\}$. 

\noindent {\bf Subcase (a)} $v_3 = v_1v_2$ and $G_f$ is a subgroup of order $4$. Hence, $G_f^2 = 4G_f$ and $(G-2G_f)^2 = 8G_f - 8G_fv$ for some nonzero $v\in G$. Therefore, $a_x = \pm 8$ for all $x\in G$. In view of Lemma~\ref{lem-7kx}, $E_x = g_2^{\alpha} (P_3+P_5)$ for all nonzero $x\in G$. By Lemma~\ref{lem-ab}, this is impossible as there is no element in $supp(E_v)$ which is divisible by $15$ for any $v$. 

\noindent {\bf Subcase (b)} $v_3 \ne v_1 v_2$. Let $H = \{1_G, v_1, v_2, v_1v_2\}$ be the subgroup of order $4$. Then $G_f^2 = 2G + 2 - 2v_1v_2v_3$. For convenience, we write $v=v_1v_2v_3$. Thus, $a_v=-8$ and $a_x=0$ if $x\neq 1_G$ or $v$. As $v\notin H$, there exists a ring homomorphism $\eta'$ that maps $H\cdot P_3$ to identity, and $\eta'(g_2)=\eta'(v) = -1$. Then as before $\eta'(E_x)=0$ if $a_x=0$. Hence, we obtain 
\[ 
\eta'(D_f)\eta'(D_f)^{(-1)} = 8+(-1)(-3-P_5) = 11 + P_5.
\]
Write $\eta'(D_f) = \sum a_ig_5^i$. Observe that $\sum a_i=\eta'(G-2G_f)=-4$. As shown above, there is no solution in $\Z[P_5]$.  

\noindent {\bf Case (3)} $|G_f| = 1$ or $3$. Ten $a_x = \pm 4$ for all $x \ne 1_G$ in $G$. Therefore, by Lemma~\ref{lem-7kx} (c), for any $E_x$ with  $1_G\neq x\in G$, 
\begin{eqnarray*} 
E_x & = &  g_2^{\alpha}[(g^0+g_5+g_5^4)(g_3+g_3^2)+g_2(g_5^2+g_5^3)] \mbox{ or } \\  
& & g_2^{\alpha}[(g^0+g_5^2+g_5^3)(g_3+g_3^2)+g_2(g_5+g_5^4)].
\end{eqnarray*} 
Observe that if we write $E_x = \sum_{i=0}^4 W_{xi} g_5^i$, then $||W_{x0}|| = 2$ and $P_2 \cap supp(E_x) = \emptyset$. 

 
Write $D_f = \sum_{i=0}^4 B_i g_5^i$ where $B_i\in \Z[G\cdot C_6]$ and $D_f D_f^{(-1)} = \sum_{i=0}^{4} Z_i g_5^i$ with $Z_i\in \N[G\cdot C_{6}]$. For each $i$, $B_i = A_{i0}+A_{i1}g_3 + A_{i3} g_3^2$ where $A_{ij}\in \N[G\cdot P_2]$. If $||A_{ij}||\geq 2$, i.e., $A_{ij} = x_1 h_1 + x_2 h_2 + \cdots$, where $x_1, x_2 \in G$ and $h_1, h_2 \in P_2$, then $A_{ij}A_{ij}^{(-1)} = 2 + x_1x_2 h_1 h_2 + \cdots$. Hence, $supp(E_{x_1})\cap \{g^0, g_2\}\neq \emptyset$. This
contradicts Lemma~\ref{lem-7kx} (c). Thus, $|A_{ij}|\leq 1$ and $||B_i||\leq 3$. Note that 
\[ 
||Z_0|| = 8 + \sum_{x \ne 1_G} ||W_{x0}|| = 8 + 2\times 7 = 22 = \sum_{i=0}^4 ||B_i||^2. 
\]
Observe that not all $||B_i||\leq 2$. Using the equation above, we may assume that $||B_i||=||B_j||=3$ and $||B_k||=2$ for some distinct $i,j,k$. Then we have 
\[ 
B_i = \sum_{t=0}^2 u_t g_2^{\alpha_i}g_3^t = u_0g_2{\alpha_0} (1+ u_0u_1g_2^{\alpha_1-\alpha_0}g_2g_3+ u_0u_1g_2^{\alpha_2-\alpha_0}g_3^2). 
\]

Let $\phi$ be a character on $G\cdot C_{30}$ such that $\phi(u_0u_1)=(-1)^{\alpha_1-\alpha_0}$ and $\phi(u_0u_2)=(-1)^{\alpha_2-\alpha_0}$. Note that such a $\phi$ exists as $u_0u_1\neq u_0u_2$. Then, it is clear that $\phi(B_i)=0$. Thus, $|\phi(D_f)|^2 = |\phi(B_j)\zeta_5^j+\phi(B_k)\zeta_5^k|^2 = 8$. In other words, we have 
\[  
|\phi(B_j)|^2+|\phi(B_k)|^2+ \phi(B_j)\overline{\phi(B_k)}\zeta_5^{j-k}+ 
\phi(B_k)\overline{\phi(B_j)}\zeta_5^{k-j}=8.
\]
This is impossible unless $\phi(B_j)=0$ or $\phi(B_k)=0$. But then $||B_k||=2$  and $||A_{kj}||\leq 1$ imply that $\phi(B_k)\neq 0$. Thus $\phi(B_j)=0$ and $|\phi(B_k)|^2=8$. This is impossible as $||B_k||=2$. This finish showing that $|G_f|\neq 1$ or $3$.

The proof is then completed. 
\end{proof}






\end{document}